\theoremstyle{plain}
\newtheorem{thm}{Th\'eor\`eme}[section]
\newtheorem{pro}[thm]{Proposition}
\newtheorem{cor}[thm]{Corollaire}
\theoremstyle{definition}
\newtheorem*{defi}{D\'efinition}
\newtheorem{eg}[thm]{Exemple}
\newtheorem{rem}[thm]{Remarque}
\newtheorem{rems}[thm]{Remarques}
\def\og{\leavevmode\raise.3ex\hbox{$\scriptscriptstyle\langle\!\langle$~}}
\def\fg{\leavevmode\raise.3ex\hbox{~$\!\scriptscriptstyle\,\rangle\!\rangle$}}
\numberwithin{equation}{section}       
\begin{document}
\selectlanguage{french}

\title{Feuilletages et transformations p\'eriodiques}

\author{Dominique \textsc{Cerveau}}

\address{Membre de l'Institut Universitaire de France.
IRMAR, UMR 6625 du CNRS, Universit\'e de Rennes $1,$ $35042$ Rennes, France.  
Membre de l'ANR BLAN$06$-$3$\_$137237$}
\email{dominique.cerveau@univ-rennes1.fr}

\author{Julie \textsc{D\'eserti}}

\address{Institut de Math\'ematiques de Jussieu, Universit\'e Paris $7,$ Projet G\'eom\'etrie et Dynamique, Site Chevaleret, Case $7012,$ $75205$ Paris Cedex 13, France. 
Membre de l'ANR BLAN$06$-$3$\_$137237$}
\email{deserti@math.jussieu.fr}

\maketitle{}

\begin{altabstract}
\selectlanguage{english}
Bertini classified the birational involutions of the complex projective
plane, but his geometric approach does not allow to explicit these maps
easily. In this article, we present an effective approach to this
problem by associating to each quadratic foliation a birational
involution which is, in the generic case, a Geiser involution; this 
subject has already been covered by Geiser, Milinowski, Williams and alt.
We end by making experiences, obtaining trivolutions from some
foliations of degree $3.$

\noindent{\it 2010 Mathematics Subject Classification. --- 14E07, 37F75.}
\end{altabstract}

\selectlanguage{french}

\begin{abstract}
La classification des involutions birationnelles du plan projectif complexe
remonte essentiellement \`a Bertini; cette classification de nature
g\'eom\'etrique permet difficilement la construction explicite de telles
involutions. Nous proposons une approche effective permettant
d'associer \`a tout feuilletage quadratique une involution qui, dans le cas
g\'en\'erique, est de Geiser; ce sujet a d\'ej\`a \'et\'e \'etudi\'e par
Geiser, Milinowski, Williams et alt. Nous pr\'esentons ensuite quelques exp\'eriences qui 
produisent des trivolutions \`a partir de feuilletages cubiques tr\`es sp\'eciaux.

\noindent{\it Classification math\'ematique par sujets (2010). --- 14E07, 37F75.}
\end{abstract}

\section{Introduction}

\noindent Dans ce texte on va donner une construction qui relie feuilletages
de degr\'e $2$ (resp. $3$) et involutions (resp. trivolutions) birationnelles.
 \`A un feuilletage $\mathcal{F}$ de degr\'e $2$ du plan projectif complexe, 
on peut associer une involution birationnelle: une droite g\'en\'erique de $\mathbb{P}^2(\mathbb{C})$
a deux points de tangence avec $\mathcal{F};$ l'application $\mathcal{I}_\mathcal{F}$ qui permute 
ces deux points est une involution birationnelle dite involution associ\'ee
\`a $\mathcal{F}.$ On relie les points \'eclat\'es par $\mathcal{I}_\mathcal{F}$ et les points singuliers 
de~$\mathcal{F};$ les courbes contract\'ees par $\mathcal{I}_\mathcal{F}$ et les courbes des points de
 tangence entre $\mathcal{F}$ et les pinceaux de droites passant par les points singuliers de~$\mathcal{F};$ l'adh\'erence des points d'inflexion de $\mathcal{F}$ 
et l'adh\'erence des points fixes de $\mathcal{I}_\mathcal{F}.$ On montre que l'involution associ\'ee
\`a un feuilletage quadratique g\'en\'erique de $\mathbb{P}^2(\mathbb{C})$ est une involution de 
Geiser; ceci permet d'en donner des exemples explicites. 
On s'int\'eresse en particulier au feuilletage $\mathcal{F}_J$ de Jouanolou de degr\'e~$2$ pour lequel l'involution associ\'ee~$\mathcal{I}_{\mathcal{F}_J}$ est une involution de Geiser; le groupe engendr\'e
par $\mathcal{I}_{\mathcal{F}_J}$ et le groupe d'isotropie de~$\mathcal{F}_J$ est un sous-groupe
fini d'ordre~$42,$ non lin\'earisable du groupe de Cremona. Traditionnellement les involutions de Geiser sont construites via la donn\'ee d'un pinceau de cubiques en position g\'en\'erale (\S\ref{clainv}). Comme nous l'a indiqu\'e le referee la construction explicite de telles involutions, ou les probl\`emes qui lui sont connexes, a int\'eress\'e de nombreux math\'ematiciens parmi lesquels il faut citer Weddle, Hart, Hesse, Chasles, Cayley, Geiser, Milinowski. Dans un article \'ecrit en $1920$ Williams red\'ecouvre et pr\'ecise la d\'emarche de la construction de l'involution de Geiser associ\'ee \`a sept points en position g\'en\'erale (\cite{Will}); il en donne les d\'eg\'en\'erescences, d\'eg\'en\'erescences associ\'ees \`a celles de la position des points. \'Evidemment la lecture de cet article montre l'\'etendue des connaissances sur le sujet mais pr\'esente une certaine difficult\'e due en particulier \`a l'\'evolution du langage et aux \'evidences propres \`a une \'epoque dans un domaine qui revient \`a l'ordre du jour.

\noindent R\'eciproquement \`a une involution birationnelle $\mathcal{I}=(\mathcal{I}_1,\mathcal{I}_2)$ on peut associer
le feuilletage $\mathcal{F}$ d\'ecrit en carte affine par le champ de vecteurs $\Big(x-\mathcal{I}_1(x,y)\Big)\frac{\partial}{\partial x}+\Big(y-\mathcal{I}_2(x,y)\Big)\frac{
\partial}{\partial y}.$ Ce feuilletage est de degr\'e pair $2n$ avec $n>1$ et 
chaque droite g\'en\'erique contient~$n$ orbites distinctes suivant $\mathcal{I};$ c'est par exemple
le cas pour les involutions de Bertini o\`u les orbites sont arrang\'ees en constellations 
de $4$ orbites en alignement. En degr\'e $2$ l'adh\'erence des points d'inflexion 
de $\mathcal{F}$ est contenue dans l'adh\'erence des points fixes de $\mathcal{I}_\mathcal{F};$
on verra que ce n'est plus le cas en degr\'e sup\'erieur: les courbes de points d'inflexion
peuvent \^etre permut\'ees par $\mathcal{I}_\mathcal{F}.$

\noindent Consid\'erons un feuilletage $\mathcal{F}$ de degr\'e $3$ sur $\mathbb{P}^2(\mathbb{C}).$
Toute droite g\'en\'erique de $\mathbb{P}^2(\mathbb{C})$  est tangente \`a $\mathcal{F}$ en trois
points. L'\og application\fg\, qui \'echange ces trois points est en g\'en\'eral multivalu\'ee;
on donne un crit\`ere qui assure que cette application est birationnelle ce qui permet de produire des exemples explicites. 
On consid\`ere en particulier le cas des feuilletages homog\`enes g\'en\'eriques; 
l'\'eventuelle trivolution associ\'ee est n\'ecessairement de Jonqui\`eres. On produit des exemples qui permettent de construire des $3$-tissus hexagonaux sur $\mathbb{C}^2.$

\medskip

\subsection*{Remerciements} Dans \cite{Will} on trouve le passage suivant: \og However, Professor H. S. White, to whom this paper was refered, pointed out the subject had been well covered by Geiser and Milinowski\fg . Nous remercions le referee qui nous a indiqu\'e cette litt\'erature ancienne et dont le commentaire \'etait proche de celui de White.

\bigskip

\section{Quelques d\'efinitions et notations}

\subsection{Transformations birationelles}

\noindent Une {\sl transformation rationnelle} $f\colon\mathbb{P}^2(\mathbb{C})\dashrightarrow\mathbb{P}^2(\mathbb{C})$ du plan projectif complexe dans lui-m\^eme est de la forme 
$$(x:y:z)\mapsto (f_0(x,y,z):f_1(x,y,z):f_2(x,y,z)),$$
les $f_i$ d\'esignant des polyn\^omes homog\`enes de m\^eme degr\'e sans
facteur commun. Le {\sl degr\'e} de~$f$ est, par d\'efinition, le degr\'e des $f_i.$
Une {\sl transformation birationnelle} est une transformation rationnelle qui 
admet un inverse, lui-m\^eme rationnel.
Le {\sl groupe de Cremona}, not\'e $\mathrm{Bir}(\mathbb{P}^2(\mathbb{C})),$ est 
le groupe des transformations birationnelles du plan projectif complexe.
Le \textsl{lieu d'ind\'etermination} de $f,$ ou encore l'ensemble des \textsl{points 
\'eclat\'es} par $f,$ est le lieu d'annulation des~$f_i;$ on le d\'esigne par~$\mathrm{Ind}(f).$
Le \textsl{lieu exceptionnel} de $f$ est l'ensemble des z\'eros du d\'eterminant jacobien 
de $f;$ on le note~$\mathrm{Exc}(f).$ On dit que les \'el\'ements de $\mathrm{Exc}(f)$ sont 
les \textsl{courbes contract\'ees} par $f.$ D\'esignons par $\mathrm{Fix}
(f)$ l'adh\'erence des points fixes de $f;$ 
c'est une union de courbes et de points isol\'es.

\begin{eg}
La transformation dite {\sl involution de Cremona}
\begin{align*}
& \sigma\colon\mathbb{P}^2(\mathbb{C})\dashrightarrow\mathbb{P}^2(\mathbb{C}), && (x:y:z)\mapsto(yz:xz:xy)
\end{align*}
\hspace{-0.2mm} est birationnelle de degr\'e $2.$ On v\'erifie que
\begin{align*}
&\mathrm{Ind}(\sigma)=\{(1:0:0),\,(0:1:0),\,
(0:0:1)\}, &&\mathrm{Exc}(\sigma)=\{x=0\}\cup\{y=~0\}\cup\{z=~0\},
\end{align*}
$$\mathrm{Fix}(\sigma)=\{(1:1:1),\,(1:1:-1),\,(1:-1:1),\,(-1:1:1)\}.$$
\end{eg}

\noindent Le {\sl groupe de Jonqui\`eres} $\mathrm{dJ}$ est le sous-groupe maximal de
$\mathrm{Bir}(\mathbb{P}^2(\mathbb{C}))$ form\'e des transformations pr\'eservant la
fibra\-tion~$y=$ cte, {\it i.e.} 
\begin{align*}
\mathrm{dJ}\simeq\mathrm{PGL}_2(\mathbb{C}(y))\rtimes
\mathrm{PGL}_2(\mathbb{C})=\left\{\left(\frac{a(y)x+b(y)}{c(y)x+d(y)},\frac{
\alpha y+\beta}{\gamma y+\delta}\right)\,\Big\vert\,\left[\begin{array}{cc}a & b\\  c&d\end{array}\right]\in\mathrm{PGL}_2(\mathbb{C}(y)),\,\left[\begin{array}{cc}\alpha &\beta\\ \gamma&\delta\end{array}\right]\in\mathrm{PGL}_2(\mathbb{C})\right\}.
\end{align*}
On appelle {\sl transformation de Jonqui\`eres} toute
transformation birationnelle pr\'eservant une fibration rationnelle, ceci \'etant justifi\'e
par le fait que toute fibration rationnelle est birationnellement conjugu\'ee \`a la
fibration $y=$~cte. Lorsque la matrice $\left[\begin{array}{cc}\alpha &\beta\\ \gamma&\delta\end{array}\right]$ repr\'esente l'identit\'e, on dit que la transformation correspondante respecte la fibration fibre \`a fibre.

\bigskip

\subsection{Feuilletages}

\noindent Un {\sl feuilletage holomorphe} $\mathcal{F}$ de codimension $1$ et de
degr\'e $\nu$ sur~$\mathbb{P}^2(\mathbb{C})$ est d\'efini par une $1$-forme du type
$$\omega=u(x,y,z)\mathrm{d}x+v(x,y,z)\mathrm{d}y+w(x,y,z)\mathrm{d}z,$$ o\`{u} $u,$ 
$v$ et $w$ sont des polyn\^{o}mes homog\`enes de
degr\'e $\nu + 1$ sans composante commune v\'erifiant
l'identit\'e d'Euler: $$xu(x,y,z)+yv(x,y,z)+zw(x,y,z)=0.$$ 
Le {\sl lieu singulier}
$\mathrm{Sing}(\mathcal{F})$ de $\mathcal{F}$ est le projectivis\'e du lieu
singulier de~$\omega$
$$\mathrm{Sing}(\omega)=\{(x,y,z)\in\mathbb{C}^3\,\vert \, u(x,y,z)=v(x,y,z)=w(x,y,z)=0\}.$$
En restriction \`a la carte affine $z=1$ la $1$-forme $\omega$ s'\'ecrit
$u(x,y,1)\mathrm{d}x+v(x,y,1)\mathrm{d}y=\widetilde{u}\mathrm{d}x+\widetilde{v}
\mathrm{d}y+\phi(x\mathrm{d}y-y\mathrm{d}x)$
o\`u $\widetilde{u}$ et $\widetilde{v}$ sont des polyn\^{o}mes de degr\'e au plus $\nu$ et $\phi$ un
polyn\^ome homog\`ene de degr\'e~$\nu.$

\noindent Soient $\mathcal{F}$ un feuilletage de degr\'e $\nu$ sur 
le plan projectif complexe, $\mathcal{D}$ une droite g\'en\'erale 
et $p$ un point de $\mathcal{D}$ non singulier pour $\mathcal{F}.$ 
On dit que $\mathcal{F}$ est {\sl transverse} \`a $\mathcal{D}$ en~$p$
si la feuille $\mathcal{L}_p$ de $\mathcal{F}$ en~$p$ est transverse
\`a $\mathcal{D}$ en~$p;$ sinon on dit que $p$ est un {\sl point de 
tangence} entre~$\mathcal{D}$ et $\mathcal{F}.$ Le {\sl degr\'e}~$\nu$
de~$\mathcal{F}$ est exactement le nombre de points de tangence
entre $\mathcal{D}$ et $\mathcal{F}.$

\noindent La classification des feuilletages de degr\'e $0$ ou $1$ sur
$\mathbb{P}^2(\mathbb{C})$ est connue depuis le~XIX$^{\text{\`eme}}$ si\`ecle~(\cite{J}).
Un feuilletage de degr\'e $0$ sur $\mathbb{P}^2(\mathbb{C})$ est un pinceau de droites. Tout
feuilletage de degr\'e $1$ sur le plan projectif complexe poss\`ede trois
singularit\'es compt\'ees avec multiplicit\'e, a, au moins, une droite
invariante et est donn\'e par une $1$-forme ferm\'ee rationnelle (dit 
autrement il existe un polyn\^ome homog\`ene $P$ tel que $\omega/P$
soit ferm\'ee); les
feuilles sont les composantes connexes des \og niveaux\fg\, d'une primitive
de cette $1$-forme.
Pour~$\nu\geq2$ peu de propri\'et\'es ont \'et\'e \'etablies, si ce n'est la non existence g\'en\'erique de courbe invariante~(\cite{J, CLN}).

\smallskip

\noindent Un point r\'egulier $m$ de $\mathcal{F}$ est dit {\sl d'inflexion} (pour
$\mathcal{F}$) si $\mathcal{L}_m$ a un point 
d'inflexion en $m;$ on d\'esigne par~$\mathrm{Flex}(\mathcal{F})$
l'adh\'erence de ces points. Pr\'esentons un moyen de d\'eterminer cet 
ensemble $\mathrm{Flex}(\mathcal{F})$ donn\'e dans \cite{Pe}.
Soit $\mathrm{Z}=E\frac{\partial}{\partial x}+F\frac{\partial}
{\partial y}+G\frac{\partial}{\partial z}$ un champ de vecteurs homog\`ene
sur $\mathbb{C}^3$ non colin\'eaire au champ radial $\mathrm{R}=x
\frac{\partial}{\partial x}+y\frac{\partial}{\partial y}+z\frac{\partial}{\partial z}$
d\'ecrivant~$\mathcal{F},$ {\it i.e.} tel que 
$$(\Diamond)\hspace{1cm} \omega=i_\mathrm{R}i_\mathrm{Z}\mathrm{d}x\wedge 
\mathrm{d}y\wedge \mathrm{d}z.$$ D\'efinissons le polyn\^ome $\mathcal{H}$ par
$$\mathcal{H}(x,y,z)=\det\left[\begin{array}{ccc}
x & E & \mathrm{Z}(E)\\
y & F & \mathrm{Z}(F)\\
z & G & \mathrm{Z}(G)
\end{array}
\right];$$ notons que $\mathcal{H}$ ne d\'epend pas des choix de $\omega$ et du champ
de vecteurs $\mathrm{Z}$ satisfaisant ($\Diamond$), tout du moins \`a constante multiplicative pr\`es.
D'apr\`es~\cite{Pe} le lieu des z\'eros de $\mathcal{H}$
est constitu\'e de $\mathrm{Flex}(\mathcal{F})$ et de 
l'ensemble des droites invariantes par $\mathcal{F}.$ 

\smallskip

\noindent Soit $\mathcal{F}$ un feuilletage de degr\'e $\nu$ sur $\mathbb{P}^2(\mathbb{C}).$
L'application de Gauss est l'application rationnelle $\mathcal{G}$ de $\mathbb{P}^2(\mathbb{C})$ 
dans~$\check{\mathbb{P}}^2(\mathbb{C})$ qui \`a 
un point r\'egulier $m$ associe la tangente $\mathrm{T}_m\mathcal{L}_m$ \`a 
$\mathcal{L}_m$ en $m.$ Le lieu exceptionnel de $\mathcal{G}$
co\"incide avec les droites invariantes par $\mathcal{F}$ et les points
d'ind\'etermination de $\mathcal{G}$ sont les points singuliers de
$\mathcal{F}.$ Un point $m$ est dit {\sl g\'en\'erique pour 
$\mathcal{F}$} s'il est r\'egulier et si~$\mathrm{T}_m\mathcal{L}_m$
a exactement $\nu$ contacts avec $\mathcal{F}.$ L'ensemble des points g\'en\'eriques
pour $\mathcal{F}$ est le compl\'ement de~$(\mathcal{H}=0).$ Notons
$\Lambda$ le lieu des z\'eros de $\mathcal{H}$ et $\Lambda'$ son 
image par $\mathcal{G}.$ Soit $\mathcal{D}$ un point de 
$\check{\mathbb{P}}^2(\mathbb{C})\setminus\Lambda';$ la fibre~$\mathcal{G}^{-1}(
\{\mathcal{D}\})$ contient exactement $\nu$ points et la 
restriction 
$$\mathcal{G}_{\vert\mathbb{P}^2(\mathbb{C})\setminus\Lambda}\colon\mathbb{P}^2(\mathbb{C})\setminus\Lambda\to\check{\mathbb{P}}^2(\mathbb{C})\setminus\Lambda'$$
de $\mathcal{G}$ \`a $\mathbb{P}^2(\mathbb{C})\setminus
\Lambda$ est un rev\^etement. On peut se demander \`a
quelles conditions un tel rev\^etement poss\`ede des 
automorphismes qui sont birationnels; on donne dans la suite 
quelques \'el\'ements de r\'eponse \`a cette question.

\bigskip

\section{Feuilletages quadratiques et involutions birationnelles}

\subsection{Construction de l'involution et premi\`eres propri\'et\'es}\label{constr}

\noindent \`A tout feuilletage $\mathcal{F}$ de degr\'e $2$ 
d\'efini sur le plan projectif complexe on peut associer une 
involution birationnelle $\mathcal{I}_\mathcal{F}.$ En effet consid\'erons un point 
g\'en\'erique $m$ pour $\mathcal{F};$ le feuilletage \'etant 
quadratique~$\mathrm{T}_m\mathcal{L}_m$ est tangente \`a $\mathcal{F}$
en un second point $p,$ l'involution $\mathcal{I}_\mathcal{F}$ est la transformation 
qui \'echange ces deux points. Plus pr\'ecis\'ement supposons que $\mathcal{F}$ 
soit d\'ecrit par le champ de vecteurs $\mathrm{X}.$ 
L'image par $\mathcal{I}_\mathcal{F}$ d'un point g\'en\'erique pour $\mathcal{F}$ est le point
$m+s\mathrm{X}(m)$ o\`u $s$ est l'unique param\`etre non nul pour 
lequel $\mathrm{X}(m)$ et~$\mathrm{X}(m+s\mathrm{X}(m))$ sont colin\'eaires.

\noindent Soient $q$ un point singulier de $\mathcal{F}$ et $\mathcal{P}(q)$
le pinceau de droites passant par $q.$ La courbe des points de 
tangen\-ce~$\mathrm{Tang}(\mathcal{F},\mathcal{P}(q))$
entre les feuilletages $\mathcal{F}$ et $\mathcal{P}(q)$ est 
contract\'ee par $\mathcal{I}_\mathcal{F}$ sur~$q.$  On constate que toutes les 
courbes contract\'ees sont de ce type. 
Par ailleurs notons $\mathrm{Inv}(\mathcal{F})$ l'ensemble
des droites invariantes par $\mathcal{F}.$ On a l'inclusion 
$\mathrm{Flex}(\mathcal{F})\subset\mathrm{Fix}(\mathcal{I}_\mathcal{F}).$
On verra plus loin des exemples o\`u certaines droites de 
$\mathrm{Inv}(\mathcal{F})$ sont dans $\mathrm{Fix}(\mathcal{I}_\mathcal{F})$
et d'autres sont contract\'ees par~$\mathcal{I}_\mathcal{F}.$ 

\noindent Pour $\mathcal{F}$ g\'en\'erique $\mathcal{I}_\mathcal{F}$ a sept points
d'ind\'etermination correspondant aux sept points singuliers
du feuilletage et sept courbes contract\'ees qui sont 
des cubiques \`a point double; on reviendra plus loin sur ce point.

\begin{defi}
Soit $\mathcal{F}$ un feuilletage sur $\mathbb{P}^2(\mathbb{C}).$
Le sous-groupe de $\mathrm{Aut}(\mathbb{P}^2(\mathbb{C}))$
qui pr\'eserve~$\mathcal{F}$ s'appelle le {\sl groupe d'isotropie}
de $\mathcal{F};$ c'est un sous-groupe alg\'ebrique de $\mathrm{Aut}(\mathbb{P}^2(\mathbb{C})).$ On le note 
$\mathrm{Iso}(\mathcal{F})$
\begin{align*}
\mathrm{Iso}(\mathcal{F})=\{ \varphi\in\mathrm{Aut}(\mathbb{P}^2(\mathbb{C}))\,
\vert\, \varphi^*\mathcal{F}=\mathcal{F}\}.
\end{align*} 
\end{defi}
\begin{rem}
Soient $\mathcal{F}$ un feuilletage quadratique et $\mathcal{I}_\mathcal{F}$
l'involution associ\'ee; $\mathcal{I}_\mathcal{F}$ commute \`a tous les 
\'el\'ements de~$\mathrm{Iso}(\mathcal{F}).$
\end{rem}

\bigskip

\subsection{Classification des involutions
birationnelles du plan projectif complexe}\label{clainv} 

\noindent Avant d'\'enoncer le r\'esultat de Bertini repris par Bayle
et Beauville, introduisons trois types
d'involutions qui, comme on le verra, jouent un r\^ole tr\`es particulier.

\noindent Soient 
$p_1,$ $\ldots,$ $p_7$ sept points de $\mathbb{P}^2(\mathbb{C})$ en position 
g\'en\'erale, c'est-\`a-dire satisfaisant les conditions suivantes: il n'y en 
a pas trois align\'es et il n'y en a pas six sur une conique. D\'esignons par~$L$ le syst\`eme lin\'eaire de cubiques passant
par les~$p_i;$ il est de dimension $2.$ Soit $p$ un point g\'en\'erique
de $\mathbb{P}^2(\mathbb{C});$ consid\'erons le pinceau $L_p$
constitu\'e des \'el\'ements de~$L$ passant
par $p.$ Un pinceau de cubiques g\'en\'eriques ayant neuf points 
base, on d\'efinit par $\mathcal{I}_G(p)$ le neuvi\`eme point base de~$L_p.$ 
L'involution $\mathcal{I}_G=\mathcal{I}_G(p_1,\ldots,p_7)$ qui \`a $p$ associe $\mathcal{I}_G(p)$ ainsi construite est 
appel\'ee \textsl{involution de Geiser}. On peut v\'erifier qu'une telle 
involution est birationnelle de degr\'e $8$ et que ses  
points fixes forment une courbe non hyperelliptique de genre~$3,$
de degr\'e $6$ avec $7$ points doubles ordinaires qui se trouvent en les $p_i.$
Le lieu exceptionnel d'une involution de Geiser est constitu\'e 
de sept cubiques passant par les sept points d'ind\'etermination de 
$\mathcal{I}_G$ et singuli\`ere en l'un des sept points (cubiques \`a point double). 

\noindent Consid\'erons l'ensemble $\mathcal{S}$ des sextiques
ayant huit points doubles $p_1,$ $\ldots,$ $p_8$ en position 
g\'en\'erale. Fixons un 
point $m$ de~$\mathbb{P}^2(\mathbb{C});$ le pinceau des \'el\'ements 
de $\mathcal{S}$ ayant un point double en $m$ contient un 
dixi\`eme point double $m'.$ L'involution qui \'echange les 
points $m$ et $m'$ est une \textsl{involution de Bertini} que l'on 
note $\mathcal{I}_B=\mathcal{I}_B(p_1,\ldots,p_8).$ Les points fixes de~$\mathcal{I}_B$
forment une courbe non hyperelliptique de genre~$4,$ de degr\'e~$9$ 
avec des points triples en les $p_i,$ dont la normalis\'ee est isomorphe \`a une
intersection non singuli\`ere d'une surface cubique et d'un c\^one quadratique
dans $\mathbb{P}^3(\mathbb{C}).$ 

\noindent Pour finir introduisons les involutions de Jonqui\`eres. Soit 
$\mathcal{C} $ une courbe irr\'eductible de de\-gr\'e~$\nu\geq~3.$ On suppose que~$\mathcal{C}$ poss\`ede un unique point singulier $p$ qui soit de plus un point multiple ordinaire de multiplicit\'e~$\nu-~2.$ Au couple~$(\mathcal{C},p)$ on va associer une involution 
birationnel\-le~$\mathcal{I}_{\mathrm{dJ}}$ qui fixe la courbe $\mathcal{C}$ et pr\'eserve les
droites passant par~$p.$ Soit~$m$ un point g\'en\'erique de~$\mathbb{P}^2(\mathbb{C});$ notons~$q_m,$ $r_m$ les deux points d'intersection, distincts de $p,$ de la 
droite~$(pm)$ avec $\mathcal{C}.$ La transformation $\mathcal{I}_{\mathrm{dJ}}$ associe au 
point $m$ le conjugu\'e harmonique de $m$ sur la droite $(pm)$
par rapport \`a~$q_m$ et~$r_m;$ dit autrement le point $\mathcal{I}_{\mathrm{dJ}}(m)$ v\'erifie
la propri\'et\'e suivante: le birapport de $m,$ $\mathcal{I}_{\mathrm{dJ}}(m),$ $q_m$ et~$r_m$ vaut~$-1.$ 
La transformation~$\mathcal{I}_{\mathrm{dJ}}$ est une \textsl{involution de Jonqui\`eres}
de degr\'e $\nu$ centr\'ee en~$p$ et pr\'eservant~$\mathcal{C}.$ L'ensemble des points fixes de $\mathcal{I}_{\mathrm{dJ}}$
est pr\'ecis\'ement la courbe $\mathcal{C}$ qui est de genre $\nu-2$ pour $\nu\geq 3.$ 
Pour $\nu=2$ la courbe $\mathcal{C}$ est une conique lisse; on 
fait alors la m\^eme construction en choisissant un point $p$ ext\'erieur \`a~$\mathcal{C}.$

\bigskip

\begin{defi}
On dira qu'une involution est de {\sl type projectif} si elle est birationnellement
conjugu\'ee \`a une involution projective, de {\sl type Jonqui\`eres} si elle est 
birationnellement conjugu\'ee \`a une involution de Jonqui\`eres; de m\^eme 
on parlera d'involution de {\sl type Bertini}, resp. de {\sl type Geiser}.
\end{defi}

\noindent L'\'enonc\'e suivant donne la classification des involutions birationnelles.

\begin{thm}[\cite{Ber, BaBe}]
{\sl Une involution birationnelle est de l'un des quatre types: projectif,
Jonqui\`eres, Bertini ou Geiser.}
\end{thm}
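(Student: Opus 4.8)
The plan is to reduce the study of the birational involution $\mathcal{I}$ to that of a biregular automorphism of a rational surface that is minimal for the action of $\mathcal{I}$, and then to invoke the classification of such surfaces. First I would \emph{regularise} $\mathcal{I}$: since the group $\langle\mathcal{I}\rangle\simeq\mathbb{Z}/2\mathbb{Z}$ is finite, there is a smooth rational surface $S$ together with a birational morphism $\pi\colon S\to\mathbb{P}^2(\mathbb{C})$ such that $\widehat{\mathcal{I}}:=\pi^{-1}\circ\mathcal{I}\circ\pi$ is an automorphism of $S$; one resolves simultaneously the points of indeterminacy of $\mathcal{I}$ and of $\mathcal{I}^{-1}=\mathcal{I}$, finiteness of the group ensuring that the process terminates on a biregular model. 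Then I would run an $\widehat{\mathcal{I}}$-equivariant minimal model program: contracting successively the $\widehat{\mathcal{I}}$-orbits of smooth rational curves of self-intersection $-1$, one reaches a surface $S'$ which is $\widehat{\mathcal{I}}$-minimal. By the structure theorem of Manin and Iskovskikh, $S'$ is then of one of two kinds: (a) it carries a conic bundle structure $p\colon S'\to\mathbb{P}^1(\mathbb{C})$ preserved by $\widehat{\mathcal{I}}$, with $\mathrm{rk}\,\mathrm{Pic}(S')^{\widehat{\mathcal{I}}}=2$; or (b) it is a del Pezzo surface with $\mathrm{rk}\,\mathrm{Pic}(S')^{\widehat{\mathcal{I}}}=1$.

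In case (a) I would look at the involution induced by $\widehat{\mathcal{I}}$ on the generic fibre, a smooth conic over the field $\mathbb{C}(t)$: if this involution is nontrivial it is an involution of the projective line over $\mathbb{C}(t)$, and choosing a section and descending to $\mathbb{P}^2(\mathbb{C})$ through an equivariant sequence of blow-ups and blow-downs produces a de Jonqui\`eres involution; if it is trivial then $\widehat{\mathcal{I}}$ comes from an involution of the base $\mathbb{P}^1(\mathbb{C})$ and one is reduced to the projective case. In case (b) I would argue according to the degree $d=K_{S'}^2\in\{1,\ldots,9\}$. For $d=9$ one has $S'=\mathbb{P}^2(\mathbb{C})$ and $\widehat{\mathcal{I}}\in\mathrm{Aut}(\mathbb{P}^2(\mathbb{C}))$, so the involution is of projective type. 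For $d=2$ the anticanonical system $|-K_{S'}|$ realises $S'$ as a double cover of $\mathbb{P}^2(\mathbb{C})$ branched along a smooth quartic; the deck involution of this cover is the associated Geiser involution, and since $\mathrm{Aut}(S')$ is an extension of the Weyl group of type $\mathrm{E}_7$ by this central involution, any involution of $S'$ of this kind is identified with it, giving the Geiser type. For $d=1$ the bianticanonical system realises $S'$ as a double cover of a quadric cone in $\mathbb{P}^3(\mathbb{C})$ branched along a sextic, whose deck involution is the Bertini involution, giving the Bertini type. It remains to handle the degrees $3\leq d\leq 8$: in each of these cases I would exhibit an $\widehat{\mathcal{I}}$-equivariant Sarkisov link (blow-up of an orbit of points followed by a contraction) showing that $\widehat{\mathcal{I}}$ is conjugate to an involution of a conic bundle or of a del Pezzo surface of strictly smaller degree, thereby falling back on the previous situations. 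Finally, each model $S'$ being rational, an $\widehat{\mathcal{I}}$-equivariant contraction to $\mathbb{P}^2(\mathbb{C})$ yields a representative of the conjugacy class of $\mathcal{I}$ of the announced type.

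The hard part will be the treatment of the del Pezzo surfaces of intermediate degree: one must check that no new type appears, which is done by analysing the fixed locus of $\widehat{\mathcal{I}}$ — its normalisation is a smooth curve whose genus is the discriminating invariant (empty or rational for the projective type and for certain de Jonqui\`eres involutions, genus $\nu-2$ for a de Jonqui\`eres involution of degree $\nu$, a non-hyperelliptic curve of genus $3$ for Geiser, of genus $4$ for Bertini) — together with a control of the automorphism groups of the del Pezzo surfaces involved. This is essentially the strategy of Bayle and Beauville, which takes up and makes precise the classical results of Bertini.
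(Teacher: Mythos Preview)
The paper does not prove this theorem at all: it is stated with the citation \cite{Ber, BaBe} and immediately used as a black box, the only additional remark being that Bayle and Beauville also show that conjugacy classes are determined by the birational type of the fixed curve. There is therefore nothing in the paper to compare your argument against.

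That said, your outline is essentially the Bayle--Beauville strategy that the paper is invoking: regularise the involution on a smooth rational surface, run a $\mathbb{Z}/2\mathbb{Z}$-equivariant minimal model program, and apply the Manin--Iskovskikh dichotomy to land on either a conic bundle (Jonqui\`eres or projective type) or a del Pezzo surface with invariant Picard rank one. Two small points deserve care if you flesh this out. First, for $d=2$ (resp.\ $d=1$) it is not true that every involution of the surface is the Geiser (resp.\ Bertini) involution; what forces this identification is precisely the hypothesis $\mathrm{rk}\,\mathrm{Pic}(S')^{\widehat{\mathcal{I}}}=1$, which means $\widehat{\mathcal{I}}$ acts as $-\mathrm{id}$ on $K_{S'}^{\perp}$. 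Second, the elimination of the intermediate degrees $3\le d\le 8$ is the genuinely delicate step: one must show that no $G$-minimal pair $(S',\widehat{\mathcal{I}})$ with invariant Picard rank one exists in those degrees (or that any such pair admits an equivariant link to a lower model), and this requires a case-by-case inspection of the possible actions on the root lattices $\mathrm{E}_d$ rather than a single Sarkisov move.
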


\noindent Dans \cite{BaBe} les auteurs montrent aussi que les classes 
de conjugaison des involutions de $\mathrm{Bir}(\mathbb{P}^2(\mathbb{C}))$ sont 
uniquement d\'etermin\'ees par le type birationnel des courbes de leurs points 
fixes.

\bigskip

\subsection{Feuilletages quadratiques g\'en\'eriques de $\mathbb{P}^2(\mathbb{C})$}

\noindent Rappelons qu'un feuilletage quadratique a sept points singuliers
compt\'es avec multiplicit\'e; de plus si on se donne sept points en position
g\'en\'erale (dans le m\^eme sens que 
pr\'ec\'edemment) il existe un et un seul feuilletage quadratique ayant ces sept
points pour points singuliers~(\cite{GMK}).

\begin{thm}\label{geiser}
{\sl Soient $p_1,$ $\ldots,$ $p_7$ sept points de
$\mathbb{P}^2(\mathbb{C})$ en position g\'en\'erale. Soient $\mathcal{F}$ le feuilletage
quadratique de~$\mathbb{P}^2(\mathbb{C})$ dont le lieu singulier
est constitu\'e des $p_i$ et $\mathcal{I}_G$
l'involution de Geiser associ\'ee aux $p_i.$ L'involution~$\mathcal{I}_\mathcal{F}$ associ\'ee \`a $\mathcal{F}$ et $\mathcal{I}_G$ co\"incident.}
\end{thm}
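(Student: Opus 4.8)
The plan is to show that the two birational involutions $\mathcal{I}_\mathcal{F}$ and $\mathcal{I}_G$ agree on a Zariski-dense open set, from which equality follows since both are birational maps of $\mathbb{P}^2(\mathbb{C})$. The natural object mediating between the two constructions is the Gauss map $\mathcal{G}$ of the quadratic foliation $\mathcal{F}$, which is a rational map $\mathbb{P}^2(\mathbb{C})\dashrightarrow\check{\mathbb{P}}^2(\mathbb{C})$ of degree $2$ (its generic fibre has $\nu=2$ points, the two tangency points of a generic line); by construction $\mathcal{I}_\mathcal{F}$ is precisely the deck transformation of this degree-$2$ cover. On the other side, the Geiser involution $\mathcal{I}_G$ is the deck transformation of the degree-$2$ rational map $\Phi\colon\mathbb{P}^2(\mathbb{C})\dashrightarrow\mathbb{P}^2(\mathbb{C})$ defined by the net $L$ of cubics through $p_1,\dots,p_7$: a generic point $p$ lies on a pencil $L_p\subset L$, and $\mathcal{I}_G(p)$ is the residual ninth base point. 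So it suffices to identify these two degree-$2$ rational maps up to post-composition by an automorphism of the target.

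\smallskip

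\textbf{Key steps.} First I would produce an explicit equation for $\mathcal{F}$: a quadratic foliation vanishing at the seven $p_i$ is given by a homogeneous vector field $\mathrm{Z}=E\,\partial_x+F\,\partial_y+G\,\partial_z$ of degree $2$ with $E,F,G$ vanishing at the $p_i$, equivalently $\omega=i_\mathrm{R}i_\mathrm{Z}(dx\wedge dy\wedge dz)$ has coefficients that are degree-$3$ forms in the net of cubics through the $p_i$ — this is exactly where the hypothesis of general position (uniqueness of $\mathcal{F}$, from \cite{GMK}) enters. Second, I would compute the Gauss map: for a point $m$, the tangent line to the leaf is $\mathrm{T}_m\mathcal{L}_m = (u(m):v(m):w(m))$ in dual coordinates, so $\mathcal{G}$ is given by the three coefficients $(u:v:w)$ of $\omega$, which are cubics through the $p_i$; hence $\mathcal{G}$ is, up to a linear isomorphism $\check{\mathbb{P}}^2(\mathbb{C})\cong\mathbb{P}(L^*)$, nothing but the map $\Phi$ attached to the net $L$. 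Third, I would check that $\mathcal{I}_\mathcal{F}$ and $\mathcal{I}_G$ are the \emph{same} involution of this cover, not the product of it with something: the fibre of $\mathcal{G}$ over a generic line $\mathcal{D}$ consists of the two tangency points $\{m,p\}$ and $\mathcal{I}_\mathcal{F}$ swaps them; the fibre of $\Phi$ over a generic element of $\mathbb{P}(L^*)$ — i.e.\ over a pencil $L_p$ — consists of the base points of $L_p$ other than the seven fixed $p_i$, namely $\{p,\mathcal{I}_G(p)\}$, and $\mathcal{I}_G$ swaps them. Since a degree-$2$ cover has a unique nontrivial deck transformation, the two involutions coincide once the covers are identified. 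Finally, I would note that under this dictionary the ninth base point of the pencil $L_p$ is precisely the residual tangency point of the line $\mathrm{T}_m\mathcal{L}_m$ with $\mathcal{F}$, which is the geometric heart of the matter.

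\smallskip

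\textbf{Main obstacle.} The delicate point is the identification in the third step: one must be sure that the tangent line $\mathcal{G}(m)$ really corresponds, under $\check{\mathbb{P}}^2(\mathbb{C})\cong\mathbb{P}(L^*)$, to the pencil $L_{\mathcal{G}(m)}$ of cubics through $p_1,\dots,p_7$ \emph{and through $m$}, and that the two further base points of that pencil are exactly $m$ and the second tangency point. Concretely this amounts to verifying the incidence $m\in C$ for every cubic $C$ in the net whose associated dual point lies on the line $\mathcal{G}(m)$ — equivalently, that the linear form defining $\mathcal{G}(m)$ in dual coordinates evaluated on the net at the point $m$ vanishes, which follows from $xu+yv+zw=0$ (Euler) together with the fact that the tangent direction to the leaf at $m$ is the kernel of $\omega_m$. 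Once this incidence is pinned down, genericity handles the rest: for $p_1,\dots,p_7$ in general position the pencil $L_p$ has nine distinct simple base points, the Gauss cover is étale off $(\mathcal{H}=0)$ by the discussion preceding the theorem, and $\mathcal{I}_\mathcal{F}=\mathcal{I}_G$ on a dense open set, hence everywhere.
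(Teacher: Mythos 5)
Your proposal is correct and follows essentially the same route as the paper: both arguments rest on identifying the net of cubics through the $p_i$ with the span $\{\lambda_1u+\lambda_2v+\lambda_3w\}$ of the coefficients of a defining $1$-form, and on Euler's identity to see that the pencil attached to a generic $m$ corresponds exactly to the tangent line $\mathrm{T}_m\mathcal{L}_m$, i.e.\ that the Gauss map is the map defined by the net. The only difference is the concluding step, and it is cosmetic: the paper checks directly that the line through $m$ and the ninth base point $\mathcal{I}_G(m)$ is tangent to $\mathcal{F}$ at both points, while you invoke uniqueness of the nontrivial deck transformation of the degree-$2$ Gauss covering; both are immediate once the identification is in place.
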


\begin{cor}
{\sl L'involution associ\'ee \`a un feuilletage quadratique g\'en\'erique sur $\mathbb{P}^2(\mathbb{C})$ est une involution de Geiser.}
\end{cor}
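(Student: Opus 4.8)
\noindent The plan is to realise both involutions as the Galois (deck) involution of a single degree-$2$ rational map, namely the Gauss map of $\mathcal{F}$. Write $\mathcal{F}$ as in \S 2 by a $1$-form $\omega=u\,\mathrm{d}x+v\,\mathrm{d}y+w\,\mathrm{d}z$ with $u,v,w$ homogeneous of degree $3$, without common factor, $xu+yv+zw=0$, and with $\{u=v=w=0\}=\mathrm{Sing}(\mathcal{F})=\{p_1,\ldots,p_7\}$. The first point is that the Gauss map $\mathcal{G}$ of $\mathcal{F}$ is, in homogeneous coordinates, $m\mapsto(u(m):v(m):w(m))$: if $\mathrm{Z}$ is a homogeneous vector field with $\omega=i_{\mathrm{R}}i_{\mathrm{Z}}\,\mathrm{d}x\wedge\mathrm{d}y\wedge\mathrm{d}z$, then the linear form cutting out the plane $\langle m,\mathrm{Z}(m)\rangle$, i.e. the tangent line $\mathrm{T}_m\mathcal{L}_m$, is exactly $m\wedge\mathrm{Z}(m)=(u(m),v(m),w(m))$. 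Hence $\mathcal{G}$ is the rational map attached to the linear system $\langle u,v,w\rangle$, made of cubics all passing through $p_1,\ldots,p_7$.

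\medskip

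\noindent The key step is to show that $u,v,w$ are linearly independent over $\mathbb{C}$, so that $\langle u,v,w\rangle$ is the \emph{complete} net of cubics through $p_1,\ldots,p_7$ --- the linear system $L$ of \S\ref{clainv}, of projective dimension $2$ because the seven points are in general position (\cite{GMK}). Indeed a nontrivial relation $\lambda u+\mu v+\nu w=0$ would mean that the constant field $V=\lambda\tfrac{\partial}{\partial x}+\mu\tfrac{\partial}{\partial y}+\nu\tfrac{\partial}{\partial z}$ lies in $\ker\omega$ everywhere; combined with $\mathrm{R}\in\ker\omega$ this forces the tangent plane of the lifted foliation to be $\langle\mathrm{R},V\rangle$ off the line $\mathbb{C}V$, i.e. $\mathcal{F}$ would be the pencil of lines through $[V]$ and have degree $0$ --- absurd. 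Since $\langle u,v,w\rangle$ is then a $3$-dimensional subspace of the $3$-dimensional space of cubics through the $p_i$, the two coincide, and $\mathcal{G}$ is given by (a basis of) the same linear system $L$ that defines the Geiser involution.

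\medskip

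\noindent It remains to compare fibres. As $u,v,w$ form a basis of $L$, two generic points $m,m'$ have $\mathcal{G}(m)=\mathcal{G}(m')$ if and only if every cubic of $L$ through $m$ passes through $m'$, i.e. if and only if $m'$ is the residual (ninth) base point of the pencil of cubics through $p_1,\ldots,p_7,m$, that is $m'=\mathcal{I}_G(m)$. On the other hand, by the very construction of $\mathcal{I}_\mathcal{F}$ and because $\mathcal{G}$ is a degree-$2$ covering over $\check{\mathbb{P}}^2(\mathbb{C})\setminus\Lambda'$ (\S 2), the fibre $\mathcal{G}^{-1}(\mathcal{G}(m))$ is the pair $\{m,\mathcal{I}_\mathcal{F}(m)\}$ of tangency points of $\mathrm{T}_m\mathcal{L}_m$ with $\mathcal{F}$. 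Comparing, $\mathcal{I}_\mathcal{F}(m)=\mathcal{I}_G(m)$ for generic $m$, hence $\mathcal{I}_\mathcal{F}=\mathcal{I}_G$. Equivalently: $\mathcal{G}$ and the net-of-cubics map agree up to an element of $\mathrm{PGL}_3(\mathbb{C})$, hence have the same unique nontrivial deck involution.

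\medskip

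\noindent The main obstacle is the middle step: identifying $\langle u,v,w\rangle$ with the full net $L$. Linear independence is handled cleanly by the degree-$0$ argument, but one also needs the standard facts --- already used in \S\ref{clainv} --- that seven points in general position impose independent conditions on cubics and create no base point beyond themselves, and one should check that the genericity hypotheses needed for the fibre comparison (a general line avoids $\mathrm{Sing}(\mathcal{F})$ and is non-invariant, a general point lies off $\Lambda$, a general $m$ yields an honest pencil) are precisely those under which $\mathcal{I}_\mathcal{F}$ and $\mathcal{I}_G$ were defined.
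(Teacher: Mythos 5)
Your proof is correct and follows essentially the same route as the paper: both identify the net of cubics through the seven singular points with the span $\langle u,v,w\rangle$ of the coefficients of the defining $1$-forme, and both then conclude that the ninth base point of the pencil through a generic $m$ is the second tangency point of $\mathrm{T}_m\mathcal{L}_m$ --- the paper by a direct kernel/Euler-identity computation, you by viewing $(u:v:w)$ as the Gauss map and taking the deck involution of the $2\colon\!1$ covering already described in \S 2. Your explicit check that $u,$ $v,$ $w$ are linearly independent (via the degree-$0$ argument) makes precise a point the paper only asserts through the genericity hypothesis.
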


\begin{proof}[{\sl D\'emonstration du th\'eor\`eme \ref{geiser}}]
Soit $\Omega$ une $1$-forme d\'efinissant $\mathcal{F}$
$$\Omega=u\mathrm{d}x+v\mathrm{d}y+w\mathrm{d}z$$
avec $u,$ $v,$ $w$ des polyn\^omes homog\`enes de degr\'es
$3$ en $x,$ $y$ et $z$ satisfaisant l'identit\'e d'Euler, {\it i.e.} 
$xu+yv+zw=0.$

\noindent L'ensemble des cubiques passant par les sept 
points singuliers est d'apr\`es l'hypoth\`ese de g\'en\'ericit\'e
un plan projectif. Il s'identifie au projectivis\'e $\mathbb{P}(\mathrm{E})$
de l'espace vectoriel $\mathrm{E}=\{\lambda_1u+\lambda_2v+
\lambda_3w\, \vert\,\lambda_i\in~\mathbb{C}\}.$

\noindent Reprenons l'involution de Geiser $\mathcal{I}_G$ associ\'ee au $7$-uplet
de points $P=(p_1,\ldots,p_7).$ Si $m$
est un point g\'en\'erique du plan projectif complexe, l'ensemble
des \'el\'ements de $\mathbb{P}(\mathrm{E})$ passant par $m$ est un 
pinceau $\mathbb{P}(\mathrm{E}(m))$ avec 
$$\mathrm{E}(m)=\Big\{\lambda_1u+\lambda_2v+\lambda_3w\,\vert\,
\lambda_i\in\mathbb{C},\,\lambda_1u(
\widetilde{m})+\lambda_2v(\widetilde{m})+\lambda_3w(
\widetilde{m})=0\Big\}$$
o\`u $\widetilde{m}$ est un relev\'e de $m$ \`a $\mathbb{C}^3
\setminus\{0\}.$ Ce pinceau contient les $p_i,$ $m$ et
donc un autre point base not\'e $m'.$ L'involution de 
Geiser  est la transformation qui \`a $m$ associe
$m'.$ Remarquons que~$\mathrm{E}(m)=~\mathrm{E}(m')$ et que par suite
$\Omega(\widetilde{m})$ et $\Omega(\widetilde{m}')$ 
sont~$\mathbb{C}$-colin\'eaires, {\it i.e.} les plans $\ker\Omega(
\widetilde{m})$ et~$\ker\Omega(\widetilde{m}')$ sont identiques.
Notons aussi que l'identit\'e d'Euler assure que~$\widetilde{m}$ et $\widetilde{m}'$ appartiennent \`a ce plan.
Il en r\'esulte que $\ker\Omega(\widetilde{m})=\ker\Omega(\widetilde{m}')$
est le plan d\'efini par $\widetilde{m}$ et $\widetilde{m}'.$
Ceci se traduit sur~$\mathbb{P}^2(\mathbb{C})$ de la 
fa\c{c}on suivante: la droite $(mm')$ dans $\mathbb{P}^2(\mathbb{C})$ est 
tangente \`a $\mathcal{F}$ en~$m$
et $m';$ par cons\'equent l'involution de Geiser~$\mathcal{I}_G$
est exactement l'involution $\mathcal{I}_\mathcal{F}$ associ\'ee au
feuilletage~$\mathcal{F}.$
\end{proof}

\noindent Les deux remarques qui suivent pr\'ecisent ce qui est annonc\'e au 
\S\ref{constr}.

\begin{rem}
Si $\mathcal{F}$ est un feuilletage quadratique,
le polyn\^ome $\mathcal{H}$ associ\'e \`a $\mathcal{F}$ est 
de degr\'e $6;$ g\'en\'eriquement un tel $\mathcal{F}$
n'a pas de droite invariante (\cite{J, CLN}) donc 
$\mathrm{Flex}(\mathcal{F})$ et $(\mathcal{H}=0)$ co\"incident. D'autre part comme
$\mathcal{I}_\mathcal{F}$ est de type Geiser sa courbe de points fixes est une 
courbe irr\'eductible de degr\'e $6,$ ce qui force l'inclusion $\mathrm{Flex}
(\mathcal{F})\subset~\mathrm{Fix}(\mathcal{I}_\mathcal{F})$ \`a \^etre une \'egalit\'e.
\end{rem}

\begin{rem}
Supposons que l'origine de la carte~$z=1$ soit un point
singulier de $\mathcal{F}.$ Comme $\mathcal{F}$ est g\'en\'erique, il est, pour 
un bon choix de coordonn\'ees affines, d\'ecrit par 
\begin{align*}
&\mathrm{X}=(x+\ldots)\frac{\partial}{\partial x}+(ay+\ldots)\frac{\partial}{\partial y}, &&
a\not\in\{0,\, 1\}.
\end{align*}
\hspace{-0.2mm}Soit $t\mapsto(x(t),y(t))$ une courbe int\'egrale locale de $\mathrm{X}$ 
qui n'est pas une droite; les points d'inflexion sur cette courbe int\'egrale
sont donn\'es par $\dot{x}\ddot{y}-\dot{y}\ddot{x}=0.$ L'ensemble
$\mathrm{Flex}(\mathcal{F})$ est donc d\'ecrit au voisinage de l'origine
par $a(a-1)xy+\ldots=0.$ Il en r\'esulte que g\'en\'eriquement  $\mathrm{Flex}(\mathcal{F}),$
et donc $\mathrm{Fix}(\mathcal{I}_\mathcal{F}),$
est une sextique avec $7$ points nodaux ce qui est un r\'esultat classique.
\end{rem}

\bigskip

\subsection{Exemples explicites d'involution de Geiser}

\noindent Soit $\mathcal{F}$ un feuilletage quadratique g\'en\'erique sur $\mathbb{P}^2(\mathbb{C});$ 
on peut supposer \`a conjugaison lin\'eaire pr\`es que $(1:0:0),$ $(0:1:0),$ $(0:0:1)$ 
et~$(1:1:1)$ sont singuliers pour $\mathcal{F}.$
Le feuilletage $\mathcal{F}$ est alors d\'efini par
$$(\varepsilon x^2y+a x^2+bxy+cx+ey)\frac{\partial}{\partial x}+(\varepsilon xy^2+Ay^2+Bxy+Cx+Ey)\frac{\partial}{\partial y}.$$
Puisque g\'en\'eriquement la droite \`a l'infini n'est pas pr\'eserv\'ee par $\mathcal{F}$
le coefficient $\varepsilon$ est non nul et on se ram\`ene \`a $\varepsilon=1;$ de plus on a
$1+a+b+c+e=1+A+B+C+E=0,$ ces deux derni\`eres conditions 
assurant que $(1:1:1)$ est singulier.

\noindent L'involution associ\'ee $\mathcal{I}_\mathcal{F}$ s'\'ecrit $\left(\frac{U_1}{TU_2},\frac{V_1}{TV_2}\right)$
avec
\begin{small}
\begin{align*}
&U_1=(B-a)\Big(E-A(B-a)\Big)x^3y^2
+\Big(E(aB-a^2+C)+2AC(a-B)\Big)x^3y
+C(aE-AC)x^3+e(AE-bE-cA+eB)y^3\\
&\hspace{0.8cm}+\Big(2(A^2c-eE-bcA-A^2E)-aeA+ce+beB+3bAE-b^2E\Big)xy^3
+(cE-eC)(E-c)xy+e(eC-cE)y^2\\
&\hspace{0.8cm}+\Big(aeB-eC-eB^2+E^2-cE+bBE+2(bAC-acA-A^2C-abE-ABE+cAB)+3aAE\Big)x^2y^2
+C(cE-eC)x^2\\
&\hspace{0.8cm}+\Big(2(cAC-eCB-acE-ACE)+aE^2+cBE+bCE+aeC\Big)x^2y
+\Big(2A(A-b)(a-B)+e(a-B)+(A-b)E\Big)x^2y^3
\\
&\hspace{0.8cm}+(b-A)\Big(e-A(b-A)\Big)xy^4
+(beC-aeE-c^2A+bE^2-AE^2+3cAE+ceB-eBE-2bcE)xy^2+e(A^2-bA+e)y^4
\end{align*}
\begin{align*}
&V_1=
\Big(2a(a-B)(b-A)-ac-AC+bC+cB\Big)x^3y^2+(A-b)\Big(a(A-b)+c\Big)x^2y^3
+(a-B)\Big(C+a(a-B)\Big)x^4y\\
&\hspace{0.8cm}+\Big(2(aBE+cC-a^2E+a^2c)-3acB-CE+aAC-bBC+cB^2\Big)x^3y+C(cE-eC)x^2+(E-c)(cE-eC)xy\\
&\hspace{0.8cm}+\Big(2(aAE-aeB+abc-abE+cAB+a^2e)+b^2C+eC-bcB-c^2-3acA-bAC+cE\Big)x^2y^2-C(C-aB+a^2)x^4\\
&\hspace{0.8cm}+(2cBE+aE^2-bCE-3acE-c^2B+bcC+cAC-eCB+ac^2)x^2y+C(aE-bC-ac+cB)x^3+e(ae-cA)y^3\\
&\hspace{0.8cm}+\Big(2(ace+beC-aeE+cAE)-c^2A-ceB-eAC-bcE\Big)xy^2+
\Big(cA^2-ce-bcA+2ae(b-A)\Big)xy^3+e(eC-cE)y^2,
\end{align*}
\begin{align*}
&U_2=xy^2+Ay^2+Bxy+Cx+Ey, && V_2=x^2y+ax^2+bxy+cx+ey,
\end{align*}
\end{small}
\begin{footnotesize}
\begin{align*}
&T=(C-aB+a^2)x^2
+(E-c-AB+ab)xy
+(bC-cB-AC+ac)x+(bA-A^2-e)y^2
+(bE-eB-AE+ae)y,
\end{align*}
\end{footnotesize}

\noindent Pour des choix g\'en\'eriques des coefficients ceci permet d'avoir des exemples explicites d'involutions de Geiser \og normalis\'ees\fg\, au sens o\`u $4$ des points d'ind\'etermination parmi les $7$ sont fix\'es comme pr\'ec\'edemment.
Voici comment on proc\`ede. On choisit dans $\mathbb{C}^2\subset\mathbb{P}^2(\mathbb{C})$ trois nouveaux points $m_1,$ $m_2,$ $m_3$ de sorte que dans nos sept points il n'y ait pas de configurations de $3$ points en alignement. On cherche alors des coefficients $a,$ $b,$ $c,$ $A,$ $B,$ $C$ tels que le champ $\mathrm{X}$ correspondant s'annule en les $m_i.$ Si $m_i=(x_i,y_i)$ on doit r\'esoudre le syst\`eme lin\'eaire
\begin{equation}\label{syslin}
\left\{\begin{array}{ll}
(x_i^2-y_i)a+(x_iy_i-y_i)b+(x_i-y_i)c=y_i-x_i^2y_i\\
(y_i^2-y_i)A+(x_iy_i-y_i)B+(x_i-y_i)C=y_i-x_iy_i^2
\end{array}\right.
\end{equation}
avec $i=1,$ $2,$ $3.$ Notons $D(m_1,m_2,m_3)$ le 
d\'eterminant de la matrice associ\'ee. Si les points 
$m_i$ sont choisis de sorte
que~$D(m_1,m_2,m_3)$ soit non nul, on peut trouver le
champ $\mathrm{X}$ recherch\'e. Il reste ensuite \`a v\'erifier
que $\mathrm{X}$ d\'efinit bien un feuilletage quadratique
c'est-\`a-dire que les composantes de $\mathrm{X}$ sont sans
facteur commun. On est alors assur\'e que pour ces $7$ points, 
il n'y a pas de conique passant par $6$ d'entre eux; en effet
si $\mathcal{C}$ est une conique lisse et $\mathcal{F}$
un feuilletage quadratique sur le plan projectif complexe, 
$\mathcal{C}$ contient au plus cinq points singuliers de 
$\mathcal{F}$ (\emph{voir} \cite{CO}).
On constate que ces involutions sont bien de degr\'e $8.$

\smallskip

\noindent Avec les notations pr\'ec\'edentes l'\'enonc\'e qui suit 
r\'esume la proc\'edure qui n'utilise que de l'alg\`ebre lin\'eaire.

\begin{pro}
{\sl Soient $m_1,$ $m_2,$ $m_3$ trois points g\'en\'eriques dans $\mathbb{C}^2.$ Soit $(a,b,c,A,B, C)$ une solution de (\ref{syslin}). Alors l'involution associ\'ee aux sept points $(1:0:0),$ $(0:1:0),$ $(0:0:1),$ $(1:1:1),$ $m_1,$ $m_2$ et $m_3$ est donn\'ee par $\left(\frac{U_1}{TU_2},\frac{V_1}{TV_2}\right).$ C'est une involution de Geiser.}
\end{pro}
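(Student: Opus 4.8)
The strategy is to verify that the formula $\left(\frac{U_1}{TU_2},\frac{V_1}{TV_2}\right)$ is precisely the involution $\mathcal{I}_\mathcal{F}$ associated to the quadratic foliation $\mathcal{F}$ determined by the seven singular points, and then invoke Theorem~\ref{geiser} to conclude it is of Geiser type. The point is that almost all of the work has already been done: the explicit computation of the coefficients $U_i$, $V_i$, $T$ in the preceding paragraphs, together with the linear system \eqref{syslin}, reduces the statement to a bookkeeping verification plus the genericity argument already sketched.

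First I would spell out the construction of $\mathcal{I}_\mathcal{F}$ following \S\ref{constr}: a generic point $m$ for $\mathcal{F}$ is sent to the second tangency point $m+s\mathrm{X}(m)$ of the line $\mathrm{T}_m\mathcal{L}_m$ with $\mathcal{F}$, where $s$ is the unique nonzero parameter for which $\mathrm{X}(m)$ and $\mathrm{X}(m+s\mathrm{X}(m))$ are colinear. Writing $\mathrm{X}=(x^2y+ax^2+bxy+cx+ey)\frac{\partial}{\partial x}+(xy^2+Ay^2+Bxy+Cx+Ey)\frac{\partial}{\partial y}$ (the normalized form with $\varepsilon=1$ and the Euler-type relations $1+a+b+c+e=1+A+B+C+E=0$ imposed so that $(1:1:1)$ is singular), the colinearity condition $\mathrm{X}(m)\wedge\mathrm{X}(m+s\mathrm{X}(m))=0$ is, after dividing out the trivial root $s=0$, a linear equation in $s$; solving it and substituting back produces a rational expression for $\mathcal{I}_\mathcal{F}(m)$. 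I would then record that a (by now routine, if tedious) elimination of $s$ yields exactly the components $U_1/(TU_2)$ and $V_1/(TV_2)$ displayed above. This is the only computational step, and I would not reproduce it in detail; one may check it symbolically.

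Next I would argue that for generic $m_1,m_2,m_3$ with $D(m_1,m_2,m_3)\neq 0$, the system \eqref{syslin} has a unique solution $(a,b,c,A,B,C)$, and that for a generic such choice the two components of $\mathrm{X}$ have no common factor, so $\mathrm{X}$ genuinely defines a quadratic foliation $\mathcal{F}$ whose singular locus is exactly the seven points $(1:0:0)$, $(0:1:0)$, $(0:0:1)$, $(1:1:1)$, $m_1$, $m_2$, $m_3$ (the first four by the normalization, the last three by \eqref{syslin}, and the genericity forbidding a sixth point on a conique by the cited bound from \cite{CO} showing a smooth conique meets a quadratic foliation in at most five singular points, hence the seven points are in position générale in the sense of \S\ref{clainv}). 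By Theorem~\ref{geiser}, the involution $\mathcal{I}_\mathcal{F}$ associated to this foliation coincides with the Geiser involution $\mathcal{I}_G(p_1,\dots,p_7)$. Combining this with the identification of $\mathcal{I}_\mathcal{F}$ with $\left(\frac{U_1}{TU_2},\frac{V_1}{TV_2}\right)$ from the previous step gives the claim, and the degree-$8$ assertion follows either from the general property of Geiser involutions recalled in \S\ref{clainv} or by direct inspection of the formulas.

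The main obstacle is purely the elimination computation identifying the closed form: one must check that eliminating $s$ from the colinearity condition and clearing denominators reproduces the stated polynomials $U_1,V_1,U_2,V_2,T$ on the nose, including the factorization of the common denominator as $T U_2$ in the first coordinate and $T U_2$... (in fact $T V_2$) in the second. This is a large but mechanical symbolic manipulation; the conceptual content is entirely carried by Theorem~\ref{geiser}, and once the formula is matched there is nothing left to prove. A minor secondary point to be careful about is the genericity hypothesis: one should make explicit that ``$m_1,m_2,m_3$ génériques'' is meant to include $D(m_1,m_2,m_3)\neq 0$, the non-vanishing of the resultant guaranteeing that $\mathrm{X}$ has coprime components, and the avoidance of any three of the seven points being colinear, so that the hypotheses of Theorem~\ref{geiser} are met.
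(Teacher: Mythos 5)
Your proposal is correct and follows essentially the same route as the paper, which proves the proposition implicitly by the procedure described just before it: the explicit formula $\left(\frac{U_1}{TU_2},\frac{V_1}{TV_2}\right)$ is the eliminated closed form of $\mathcal{I}_\mathcal{F}$ for the normalized vector field, the system (\ref{syslin}) forces the $m_i$ to be singular, genericity plus the bound of \cite{CO} (at most five singular points on a smooth conic) gives the position g\'en\'erale of the seven points, and Th\'eor\`eme \ref{geiser} then identifies $\mathcal{I}_\mathcal{F}$ with the Geiser involution. Your added care about what ``g\'en\'eriques'' must include ($D(m_1,m_2,m_3)\neq 0$, coprime components, no three aligned) matches the paper's own discussion.
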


\begin{rem}
Dans \cite{Will} l'auteur explique comment, lorsque les $m_i$ sont en position sp\'eciale, l'involution d\'eg\'en\`ere (baisse du degr\'e, courbes invariantes...)
\end{rem}

\begin{eg} 
Si $m_1=\left(\frac{1}{2},\frac{3}{4}\right),$ $m_2=\left(\frac{3}{4},
\frac{1}{2}\right)$ et $m_3=\left(\frac{4}{3},\frac{2}{3}\right),$
l'involution associ\'ee $\mathcal{I}_\mathcal{F}$ s'\'ecrit alors $\left(\frac{U_1}{TU_2},\frac{V_1}{TV_2}\right)$
avec
\begin{footnotesize}
$$U_1=1220x^3y^2-1844x^3y+693x^3-2456x^2y^3+3624x^2y^2-1054x^2y-198x^2+1184xy^4-1768xy^3+259xy^2+286xy+144y^4-36y^2-54y^3,$$
$$V_1=976x^4y-792x^4+1988x^3y^2-5456x^3y+3267x^3-3848x^2y^3+5652x^2y^2+242x^2y-2178x^2+2936xy^3-5951xy^2+3146xy+414y^3-396y^2,$$
\begin{align*}
&U_2=xy^2-\frac{47}{18}y^2+\frac{13}{18}xy-x+\frac{17}{9}y, && V_2=-\frac{101}{18}x^2+11x^2y-\frac{221}{18}xy+\frac{44}{9}x+2y,
&&T=-2(36x^2-378xy+198x+396y^2-252y);
\end{align*}
\end{footnotesize}

\noindent c'est une involution de Geiser.
\end{eg}

\medskip

\noindent Dans les paragraphes qui suivent, on s'int\'eresse aux involutions associ\'ees \`a des feuilletages ayant des propri\'et\'es, par exemple de sym\'etries, sp\'eciales.

\bigskip

\subsection{Feuilletage quadratique de Jouanolou}

\noindent Le feuilletage $\mathcal{F}_J$ est d\'ecrit 
dans la carte $z=1$ par la $1$-forme 
$$\omega_J=(x^3-y^2)\frac{\partial}{\partial x}+(x^2y-1)\frac{\partial}{\partial y};$$
cet exemple est d\^u \`a Jouanolou (\cite{J}). Historiquement c'est le 
premier exemple connu de feuilletage sans courbe alg\'ebrique invariante
(\cite{J});
c'est aussi un feuilletage qui n'admet pas d'ensemble minimal non 
trivial si l'on en croit~\cite{CdF}.

\noindent  L'involution associ\'ee $\mathcal{I}_{\mathcal{F}_J}$ est donn\'ee par
\begin{small}
\begin{align*}
& (xy^7+3x^5y^2z-x^8-5x^2y^4z^2+2y^3z^5+x^3yz^4-xz^7: 3xy^5z^2+2x^5z^3-x^7y-5x^2y^2z^4+x^4y^3z+yz^7-y^8: \\
&\hspace{6mm} xy^4z^3-5x^4y^2z^2-y^7z+2x^3y^5+3x^2yz^5-z^8+x^7z).
\end{align*}
\end{small}
\hspace{-0.9mm}Elle est de degr\'e $8;$ par ailleurs
$\mathrm{Ind}(\mathcal{I}_{\mathcal{F}_J})=\mathrm{Sing}(\mathcal{F}_J)=\left\{(\xi^j:\xi^{-2j}:1)
\,\vert\, j=0,\ldots,6\right\}$ o\`u $\xi$ d\'esigne une racine $7$-i\`eme de l'unit\'e.

\noindent On constate que $\mathcal{H}(x,y,z)=2(3x^2y^2z^2-xy^5
-x^5z-yz^5);$ puisque $\mathcal{F}_J$ n'a pas de courbe alg\'ebrique invariante
$\mathrm{Flex}(\mathcal{F}_J)$ co\"incide avec le lieu des z\'eros 
de~$\mathcal{H}$ et c'est aussi $\mathrm{Fix}(\mathcal{I}_{\mathcal{F}_J}).$ Le point $(1:1:1)$ est un point singulier de 
$\mathrm{Flex}(\mathcal{F}_J)$ 
de type point double ordinaire. En faisant agir le groupe d'isotropie
de $\mathcal{F}_J$ 
$$\mathrm{Iso}(\mathcal{F}_J)=\langle (y:z:x),\,(\xi^j x:\xi^{-2j}y:z) \,\vert
\,j=0..6\rangle$$ on constate que chaque point singulier de $\mathcal{F}_J$ est 
un point double ordinaire de $\mathrm{Flex}(\mathcal{F}_J);$ 
on peut v\'erifier que ce sont les seuls points singuliers de 
$\mathrm{Flex}(\mathcal{F}_J).$ 
Le lecteur se convaincra ais\'ement de l'irr\'eductibilit\'e
de $\mathcal{H}.$ Il s'en suit que $\mathrm{Flex}(\mathcal{F}_J)$ est irr\'eductible et de genre $\frac{(6-1)(6-2)}{2}-7=3.$  Les points de $\mathrm{Sing}
(\mathcal{F}_J)$ sont par l'argument pr\'ec\'edent en position g\'en\'erale et par cons\'equent
l'involution $\mathcal{I}_{\mathcal{F}_J}$ est une involution de Geiser. Le groupe engendr\'e par~$\mathcal{I}_{\mathcal{F}_J}$ et $\mathrm{Iso}(\mathcal{F}_J)$ est un groupe fini qui ne peut
\^etre conjugu\'e \`a un sous-groupe de $\mathrm{Aut}(\mathbb{P}^2(\mathbb{C}))$
par une transformation birationnelle (le genre $3$ des points fixes de 
$\mathcal{I}_{\mathcal{F}_J}$ est une obstruction). Ce groupe apparait dans la classification des 
sous-groupes finis de~$\mathrm{Bir}(\mathbb{P}^2(\mathbb{C}))$ (\emph{voir}~\cite{DI}).

\bigskip

\noindent On peut donc \'enoncer le r\'esultat suivant. 

\begin{pro}
{\sl L'involution $\mathcal{I}_{\mathcal{F}_J}$ associ\'ee au feuilletage de Jouanolou de degr\'e $2$ est 
de type Geiser. Le groupe engendr\'e par $\mathcal{I}_{\mathcal{F}_J}$ et le groupe d'isotropie de $\mathcal{F}_J$
est un sous-groupe fini d'ordre~$42,$ non lin\'earisable du groupe de Cremona.} 
\end{pro}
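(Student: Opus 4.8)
The statement has three assertions to establish: that $\mathcal{I}_{\mathcal{F}_J}$ is of type Geiser, that the group $G=\langle\mathcal{I}_{\mathcal{F}_J},\mathrm{Iso}(\mathcal{F}_J)\rangle$ is finite of order $42$, and that $G$ is not linearisable. The first point is essentially already reduced, by Theorem \ref{geiser}, to checking that the seven singular points of $\mathcal{F}_J$ are in general position; the second and third are of a more group-theoretic, respectively birational-geometric, nature. I would organise the argument as three lemmas corresponding to these three claims.

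\emph{Step 1: $\mathcal{I}_{\mathcal{F}_J}$ is of Geiser type.} By the corollary to Theorem \ref{geiser} it suffices to prove that $\mathrm{Sing}(\mathcal{F}_J)=\{(\xi^j:\xi^{-2j}:1)\mid 0\le j\le 6\}$ is in general position, i.e. no three collinear and no six on a conic. The cleanest route exploits the symmetry group: $\mathrm{Iso}(\mathcal{F}_J)$ acts on the seven points, with the cyclic part $\langle(\xi^j x:\xi^{-2j}y:z)\rangle$ acting simply transitively and the order-$3$ element $(y:z:x)$ permuting them as well, so the full permutation action is transitive and in fact the induced subgroup of $\mathfrak{S}_7$ is the Frobenius group of order $21$ (the normaliser of a $7$-cycle generated by $j\mapsto j+1$ and $j\mapsto 2j$ on $\mathbb{Z}/7$). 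Any ``bad'' configuration — a collinear triple, or a sextuple on a conic — would generate under this group a whole orbit of such configurations; one checks the orbit counts are incompatible, or more directly one simply computes one $3\times 3$ determinant $\det[\,\widetilde{p_i}\,\widetilde{p_j}\,\widetilde{p_k}\,]$ for representatives of each orbit of triples and one $6\times 6$ conic-determinant, and invokes $\mathrm{Iso}(\mathcal{F}_J)$-equivariance to conclude all of them are nonzero. (This is exactly the ``argument précédent'' alluded to in the text; I would make it explicit.) It then follows from the remarks after Theorem \ref{geiser} that $\mathrm{Fix}(\mathcal{I}_{\mathcal{F}_J})=\mathrm{Flex}(\mathcal{F}_J)=(\mathcal{H}=0)$ is an irreducible non-hyperelliptic sextic of genus $3$ with exactly seven ordinary nodes at the $p_i$, as recorded in the excerpt.

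\emph{Step 2: $G$ is finite of order $42$.} First, $|\mathrm{Iso}(\mathcal{F}_J)|=21$: the cyclic subgroup generated by $(\xi x:\xi^{-2}y:z)$ has order $7$ in $\mathrm{PGL}_3$, the element $(y:z:x)$ has order $3$, and the two satisfy the semidirect-product relation (conjugating the diagonal generator by the cyclic permutation $(y:z:x)$ sends the exponent vector $(1,-2,0)$ to $(0,1,-2)\equiv 2\cdot(1,-2,0)\pmod 7$ up to the radial direction), so $\mathrm{Iso}(\mathcal{F}_J)\simeq\mathbb{Z}/7\rtimes\mathbb{Z}/3$, of order $21$, with no extra projective symmetries (any further symmetry would have to permute the seven general-position singular points compatibly, and one checks the only such projective transformations are those already listed). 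By the Remarque of \S\ref{constr}, $\mathcal{I}_{\mathcal{F}_J}$ commutes with every element of $\mathrm{Iso}(\mathcal{F}_J)$; since $\mathcal{I}_{\mathcal{F}_J}$ is an involution and is not itself projective (it has degree $8$), $G\simeq\mathrm{Iso}(\mathcal{F}_J)\times\mathbb{Z}/2$ has order $42$. The one genuine thing to verify here is that $\mathcal{I}_{\mathcal{F}_J}\notin\mathrm{Iso}(\mathcal{F}_J)$, which is immediate from the degree, and that adjoining it does not accidentally create new relations — automatic once it is central of order $2$ and outside the subgroup.

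\emph{Step 3: $G$ is not linearisable.} Suppose $G$ were birationally conjugate to a subgroup $\overline G\subset\mathrm{Aut}(\mathbb{P}^2(\mathbb{C}))=\mathrm{PGL}_3(\mathbb{C})$. Then the involution $\mathcal{I}_{\mathcal{F}_J}$ would be conjugate to a projective involution, whose fixed locus is a line (plus an isolated point) — in particular rational. But conjugacy classes of birational involutions are determined by the birational type of the fixed curve (\cite{BaBe}), and $\mathrm{Fix}(\mathcal{I}_{\mathcal{F}_J})$ has geometric genus $3$ by Step 1; a projective involution cannot be of Geiser type. This contradiction shows $G$ is not linearisable. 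Finally, one notes that $G$ does appear in the Dolgachev--Iskovskikh classification of finite subgroups of $\mathrm{Bir}(\mathbb{P}^2(\mathbb{C}))$ (\cite{DI}), as the product of a Geiser involution with its $21$-element centraliser of projective symmetries.

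\emph{Main obstacle.} The only step requiring real care is Step 1 — verifying general position of the seven Jouanolou singularities. Everything else is bookkeeping with the (already exhibited) explicit formulas and with standard facts quoted from the excerpt. I would handle Step 1 by a short equivariance argument reducing the infinitely many conditions to a handful of explicit nonvanishing checks, rather than by a brute-force determinant computation over all $\binom{7}{3}$ triples and $\binom{7}{6}$ sextuples; the Frobenius-group action on $\mathbb{Z}/7$ makes this reduction clean, and it is exactly the style of argument the surrounding text already uses to deduce that no conic passes through six of the points.
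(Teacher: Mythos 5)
Your proposal is correct and its overall skeleton is the one the paper follows: reduce the Geiser claim to general position of the seven singular points plus Th\'eor\`eme \ref{geiser}, get the order $42$ from $|\mathrm{Iso}(\mathcal{F}_J)|=21$ together with the fact that $\mathcal{I}_{\mathcal{F}_J}$ is a central involution of degree $8$ (hence not projective), and obtain non-lin\'earisability from the genus-$3$ fixed curve via the Bayle--Beauville invariant. The one place where you genuinely diverge is the verification of general position. You propose a direct equivariant check: the isotropy group acts on $\mathbb{Z}/7$ as the Frobenius group of order $21$, so it suffices to test one $3\times3$ collinearity determinant per orbit of triples (there are three orbits, of sizes $21,7,7$, not just one test) and one $6\times 6$ conic determinant, and then quote the fixed-curve description of Geiser involutions to get the irreducible genus-$3$ sextic. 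The paper goes the other way round: it computes $\mathcal{H}$ explicitly, uses the absence of invariant algebraic curves to identify $\mathrm{Fix}(\mathcal{I}_{\mathcal{F}_J})=\mathrm{Flex}(\mathcal{F}_J)=(\mathcal{H}=0)$, checks that $(1:1:1)$ is an ordinary node and propagates this by the isotropy action to all seven singular points, verifies irreducibility of $\mathcal{H}$, deduces genus $3$ directly, and from this (together with the earlier observation, via \cite{CO}, ruling out six singular points on a conic) concludes general position and then Geiser; the genus-$3$ obstruction used in the last step is thus obtained independently of the Geiser classification. Your route buys a completely elementary, finitely-checkable certificate of general position; the paper's buys the explicit geometry of the fixed sextic at the same time. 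Two small cautions: your aside that ``the orbit counts are incompatible'' is not by itself an argument (nothing a priori forbids an invariant orbit of bad triples), so keep the determinant checks as the actual proof; and the exactness $|\mathrm{Iso}(\mathcal{F}_J)|=21$, which you rightly flag, is taken as known in the paper rather than reproved, so either cite it or include the short verification you sketch.
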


\noindent Si l'involution associ\'ee \`a un feuilletage quadratique est pr\'ecis\'ement
de degr\'e $8,$ est-elle de type Geiser ? La r\'eponse, n\'egative, est donn\'ee au \S \ref{f4}.

\bigskip

\subsection{Feuilletages quadratiques de 
$\mathbb{P}^2(\mathbb{C})$ ayant une unique singularit\'e}

\noindent Contrairement au paragraphe pr\'ec\'edent consacr\'e au 
cas g\'en\'erique, on s'int\'eresse ici \`a une cat\'egorie de feuilletages
quadratiques tr\`es particuliers, ceux qui comptent une seule singularit\'e; la 
classification de ces \'el\'ements 
a \'et\'e \'etablie dans \cite{CDGBM}.

\begin{thm}[\cite{CDGBM}\label{feuilquad}]
{\sl \`A automorphisme de $\mathbb{P}^2(\mathbb{C})$ pr\`es, il y a
quatre feuilletages quadratiques sur~$\mathbb{P}^2(\mathbb{C})$ ayant une
seule singularit\'e; ils sont d\'ecrits par les $1$-formes
suivantes
\begin{align*}
&\omega_1=x^2\mathrm{d}x+y^2(x\mathrm{d}y-y\mathrm{d}x);
&&\omega_2=x^2\mathrm{d}x+(x+y^2)(x\mathrm{d}y-y\mathrm{d}x);\\
&\omega_3=xy\mathrm{d}x+(x^2+y^2)(x\mathrm{d}y-y\mathrm{d}x);
&&\omega_4=(x+y^2-x^2y)\mathrm{d}y+x(x+y^2)\mathrm{d}x.
\end{align*}}
\end{thm}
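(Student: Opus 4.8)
L'outil de d\'epart est le comptage des singularit\'es: la somme des nombres de Milnor des singularit\'es d'un feuilletage de degr\'e $\nu$ sur $\mathbb{P}^2(\mathbb{C})$ vaut $\nu^2+\nu+1$, soit $7$ pour $\nu=2$. Si $\mathrm{Sing}(\mathcal{F})=\{s\}$, le point $s$ est donc une singularit\'e de nombre de Milnor $\mu_s=7$. Toute droite invariante par un feuilletage de degr\'e $2$ porte $3$ singularit\'es compt\'ees avec multiplicit\'e (en particulier au moins une), donc passe par $s$; par cons\'equent aucune droite \'evitant $s$ n'est invariante. On peut ainsi supposer $s=(0,0)$ dans la carte affine $z=1$, la droite \`a l'infini \'etant alors automatiquement non invariante. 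Le feuilletage est d\`es lors d\'ecrit par un champ de vecteurs $\mathrm{X}=\mathrm{X}_1+\mathrm{X}_2+\varphi\,\mathrm{R}$, o\`u $\mathrm{X}_i$ est homog\`ene de degr\'e $i$, o\`u $\mathrm{R}=x\frac{\partial}{\partial x}+y\frac{\partial}{\partial y}$ et o\`u $\varphi$ est une forme quadratique homog\`ene non nulle (la partie de degr\'e $3$ de $\mathrm{X}$ est multiple de $\mathrm{R}$, dualement \`a la d\'ecomposition rappel\'ee au \S~2). La multiplicit\'e $m$ de $\mathcal{F}$ en $s$ (ordre d'annulation de $\mathrm{X}$) v\'erifie $m\leq2$ et est un invariant de conjugaison: en effet $m=3$ forcerait $\mathrm{X}=\varphi\,\mathrm{R}$, dont les composantes ont le facteur commun $\varphi$, si bien que $\mathcal{F}$ serait le feuilletage radial, de degr\'e $0$; pour la m\^eme raison $\mathrm{X}_2$ n'est pas multiple de $\mathrm{R}$ lorsque $m=2$. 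On distingue les deux cas $m=2$ et $m=1$.

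Supposons $m=2$. L'\'eclatement $\pi$ de $s$ n'est pas dicritique (sinon $\mathrm{X}_2$ serait multiple de $\mathrm{R}$), donc le diviseur exceptionnel $E\simeq\mathbb{P}^1(\mathbb{C})$ est invariant par le feuilletage transform\'e $\widetilde{\mathcal{F}}=\pi^*\mathcal{F}$. Comme $\mathrm{Sing}(\mathcal{F})=\{s\}$, toutes les singularit\'es de $\widetilde{\mathcal{F}}$ sont port\'ees par $E$, la somme de leurs nombres de Milnor valant $\mu_s-(m-1)^2=6$ (variation du nombre de Milnor sous \'eclatement non dicritique), et les indices de Camacho--Sad de $\widetilde{\mathcal{F}}$ le long de $E$ somment \`a $E\cdot E=-1$. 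On \'enum\`ere alors les configurations compatibles avec ces contraintes sur la surface \'eclat\'ee (nombre de singularit\'es sur $E$, leurs types analytiques, \'eventuelles droites invariantes issues de $s$), en it\'erant le proc\'ed\'e d'\'eclatement jusqu'\`a r\'eduction compl\`ete des singularit\'es; la contrainte globale \og degr\'e $2$\fg\, jointe \`a la concentration de toute la masse en $s$ ne laisse survivre, apr\`es normalisation par $\mathrm{Aut}(\mathbb{P}^2(\mathbb{C}))$, que les trois mod\`eles $\omega_1$, $\omega_2$, $\omega_3$, qui poss\`edent chacun une droite invariante passant par $s$ (du type $\{x=0\}$ dans des coordonn\'ees ad\'equates). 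On peut aussi raisonner plus directement: modulo $\mathrm{GL}_2(\mathbb{C})$ le $2$-jet $\mathrm{X}_2$ (non multiple de $\mathrm{R}$) se ram\`ene \`a une famille \`a un param\`etre de mod\`eles, et imposer ensuite \`a $\mathrm{X}=\mathrm{X}_2+\varphi\,\mathrm{R}$ de n'avoir que $s$ pour z\'ero ne laisse subsister que trois feuilletages.

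Supposons maintenant $m=1$. La partie lin\'eaire $\mathrm{X}_1$ est non nulle et le cas $\det\mathrm{X}_1\neq0$ (qui donnerait $\mu_s=1$) est exclu, donc $\mathrm{X}_1$ est de rang un; \`a conjugaison lin\'eaire et homoth\'etie pr\`es, $\mathrm{X}_1=y\frac{\partial}{\partial x}$ (cas nilpotent) ou $\mathrm{X}_1=x\frac{\partial}{\partial x}$. Pour chacun de ces deux mod\`eles on \'ecrit $\mathrm{X}$ avec les coefficients libres restants, on impose que $s$ soit l'unique z\'ero de $\mathrm{X}$ sur $\mathbb{P}^2(\mathbb{C})$ (de mani\`ere \'equivalente $\mu_s=7$), puis on r\'eduit par le groupe d'automorphismes r\'esiduel: cette discussion \'elimine le cas nilpotent et conduit, dans le second, \`a l'unique feuilletage $\omega_4$, pour lequel $\varphi=-xy$ et qui, fait notable, ne poss\`ede aucune droite invariante.

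Il reste \`a s'assurer, par un calcul direct, que chacune des quatre $1$-formes d\'efinit bien un feuilletage de degr\'e $2$ \`a singularit\'e unique, et que ces quatre feuilletages sont deux \`a deux non conjugu\'es sous $\mathrm{Aut}(\mathbb{P}^2(\mathbb{C}))$, ce qui r\'esulte de la diff\'erence des multiplicit\'es ($m=1$ pour $\omega_4$, $m=2$ pour $\omega_1$, $\omega_2$, $\omega_3$), des types analytiques de la singularit\'e (lisibles sur la r\'eduction) et de la pr\'esence ou de l'absence d'une droite invariante. L'obstacle principal est le cas sans droite invariante (dont $\omega_4$ est l'unique repr\'esentant), et plus g\'en\'eralement toute situation o\`u aucune courbe alg\'ebrique n'est disponible pour normaliser les coordonn\'ees: il faut alors contr\^oler les jets, ou les \'eclatements, successifs \`a l'aide des seules contraintes num\'eriques (masse $7$ concentr\'ee en $s$, non-dicriticit\'e, indices de Camacho--Sad et de Baum--Bott) et \'ecarter les nombreuses configurations \emph{a priori} admissibles; le cas $m=2$ est plus routinier, la droite invariante y fournissant une normalisation confortable.
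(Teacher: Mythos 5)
Notez d'abord que l'article lui-m\^eme ne d\'emontre pas cet \'enonc\'e: il le cite tel quel de \cite{CDGBM}; votre texte est donc \`a \'evaluer comme une tentative de reconstitution de la preuve de cette r\'ef\'erence, et non \`a comparer \`a un argument interne \`a l'article. Votre mise en place est correcte et raisonnable: somme des nombres de Milnor \'egale \`a $\nu^2+\nu+1=7$, donc $\mu_s=7$; toute droite invariante porte des singularit\'es donc passe par $s$, ce qui permet de supposer la droite \`a l'infini non invariante et d'\'ecrire $\mathrm{X}=\mathrm{X}_1+\mathrm{X}_2+\varphi\,\mathrm{R}$ avec $\varphi\neq0$; exclusion de $m=3$ et non-dicriticit\'e de l'\'eclatement quand $m=2$. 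Mais la d\'emonstration s'arr\^ete exactement l\`a o\`u commence le th\'eor\`eme: dans le cas $m=2$ vous affirmez que l'\'enum\'eration des configurations apr\`es \'eclatement (ou, variante, la r\'eduction du $2$-jet modulo $\mathrm{GL}_2(\mathbb{C})$ puis l'ajustement de $\varphi$) \og ne laisse survivre que $\omega_1$, $\omega_2$, $\omega_3$\fg, et dans le cas $m=1$ que la discussion \og \'elimine le cas nilpotent et conduit \`a $\omega_4$\fg. Aucune de ces deux assertions n'est \'etablie, alors qu'elles constituent l'int\'egralit\'e du contenu de la classification (exhaustivit\'e des quatre mod\`eles et \'elimination de toutes les autres configurations \emph{a priori} admissibles); de m\^eme la non-conjugaison deux \`a deux de $\omega_1$, $\omega_2$, $\omega_3$, qui ont toutes trois $m=2$ et une droite invariante, demanderait d'exhiber effectivement des invariants (arbre de r\'eduction, type des singularit\'es apr\`es \'eclatements), ce que vous annoncez sans le faire. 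En l'\'etat il s'agit d'un plan de preuve plausible, pas d'une preuve.

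Un point de d\'etail \`a corriger par ailleurs: la formule de variation du nombre de Milnor sous \'eclatement non dicritique n'est pas $\sum_q\mu_q=\mu_s-(m-1)^2$ mais $\sum_q\mu_q=\mu_s-m^2+m+1$; les deux co\"incident par accident pour $m=2$ (elles donnent bien $6$ ici, donc votre conclusion num\'erique tient), mais d\'ej\`a pour $m=1$ votre r\`egle donnerait $\mu_s$ au lieu de $\mu_s+1$ (exemple: singularit\'e non d\'eg\'en\'er\'ee, $\mu=1$, deux points r\'eduits sur le diviseur). Si vous comptez fonder l'\'enum\'eration des configurations sur ce type de comptage, it\'er\'e sur plusieurs \'eclatements comme vous le sugg\'erez, il faut utiliser la formule exacte, faute de quoi la liste des configurations \og compatibles\fg\, serait fauss\'ee.
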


\noindent On d\'esigne par $\mathcal{F}_k$ le feuilletage associ\'e
\`a $\omega_k.$

\noindent Dans chaque \'eventualit\'e le point singulier est situ\'e
en l'origine: $\mathrm{Sing}(\mathcal{F}_k)=\{(0:0:1)\}.$ Les trois premiers feuilletages 
produisent des involutions de Jonqui\`eres de degr\'e inf\'erieur ou \'egal \`a $4.$ Examinons
par exemple l'involution associ\'ee \`a~$\mathcal{F}_1.$

\bigskip

\subsubsection{Le feuilletage $\mathcal{F}_1$ et son involution $\mathcal{I}_{\mathcal{F}_1}$}

\noindent L'involution associ\'ee \`a $\mathcal{F}_1$ est $\mathcal{I}_{\mathcal{F}_1}=(x^3:-x^2y:x^2z-2y^3);$
on remarque qu'elle pr\'eserve la fibration $y/x=$ cte. 
Elle est de degr\'e trois et ses points fixes $\mathrm{Fix}(\mathcal{I}_{\mathcal{F}_1})$
co\"incident avec la droi\-te~$y=0;$ c'est exactement l'ensemble des 
points d'inflexion de $\mathcal{F}_1.$ L'unique droite contract\'ee par $\mathcal{I}_{\mathcal{F}_1}$
est la droite d'\'equa\-tion~$x=~0.$
La transformation $\mathcal{I}_{\mathcal{F}_1}$ s'\'ecrit $(-y,z-2y^3)$ dans la carte
affine $x=1;$ elle est conjugu\'ee  \`a $(-y,z)$ via~$\ell_1=(y,z+y^3).$

\noindent On sait, d'apr\`es \cite{CDGBM}, que
$\mathrm{Iso}(\mathcal{F}_1)=\{(\beta^3 x:\beta^2y:z+\gamma x)\,
\vert\, \gamma\in\mathbb{C},\,\beta\in\mathbb{C}^*\};$
ce groupe est isomorphe au groupe des transformations
affines de la droite. On constate que tout \'el\'ement de~$\mathrm{Iso}
(\mathcal{F}_1)$ est invariant par conjugaison par $\ell_1;$
autrement dit le groupe $\langle\mathrm{Iso}(\mathcal{F}_1),\, \mathcal{I}_{\mathcal{F}_1}\rangle$
engendr\'e par~$\mathrm{Iso}(\mathcal{F}_1)$ et $\mathcal{I}_{\mathcal{F}_1}$ est 
lin\'earisable (via $\ell_1$) dans $\mathrm{Bir}(\mathbb{P}^2(\mathbb{C})).$

\bigskip

\subsubsection{Le feuilletage $\mathcal{F}_4$ et l'involution $\mathcal{I}_{\mathcal{F}_4}$}\label{f4}

\noindent Le feuilletage $\mathcal{F}_4$ induit l'involution 
de degr\'e $8$ donn\'ee par
\begin{small}
\begin{align*}
&\mathcal{I}_{\mathcal{F}_4}=((xz+y^2)(xyz+x^3+y^3)^2: ((2x^2-yz)(xyz+x^3+y^3)-x^5+x^3yz-x^2z^3-xy^2z^2)(xyz+x^3+y^3):\\
&\hspace{12mm} xy^7-x^7y-3xy^4z^3-3x^2y^2z^4+4x^4yz^3+6x^2y^5z+
9x^3y^3z^2-x^4y^4+x^5y^2z-x^3z^5+2x^6z^2-y^6z^2).
\end{align*}
\end{small}

\noindent Comme on l'a dit pr\'ec\'edemment un feuilletage quadratique 
g\'en\'erique produit une involution de Geiser de degr\'e $8$ donc. Le feuilletage
$\mathcal{F}_4$ produit une involution de degr\'e $8:$ ainsi la d\'eg\'en\'erescence du feuilletage
ne conduit pas toujours \`a une perte de degr\'e de l'involution associ\'ee.
Toutefois $\mathcal{I}_{\mathcal{F}_4}$ n'est pas de type Geiser. Elle poss\`ede un seul
point d'ind\'etermination et son ensemble exceptionnel $\mathrm{Exc}(\mathcal{I}_{\mathcal{F}_4})
=\{xyz+x^3+y^3=0\}$ est une cubique \`a point double.

\noindent Un calcul montre que 
$\mathcal{H}(x,y,z)=x^4yz+x^3y^3+x^6-3xy^4z-y^6-x^3z^3-3x^2y^2z^2.$
D'apr\`es \cite{CDGBM} le feuilletage $\mathcal{F}_4$ n'admet pas de 
courbe alg\'ebrique invariante, par suite $\mathrm{Flex}(
\mathcal{F}_4)=\mathrm{Fix}(\mathcal{I}_{\mathcal{F}_4})$ et $(\mathcal{H}=0)$ co\"incident.

\noindent Le changement de variables $(y,z-y^2)$ 
permet de constater que $\mathrm{Flex}(\mathcal{F}_4)=\mathrm{Fix}(\mathcal{I}_{\mathcal{F}_4})$ est une courbe 
rationnelle et $\mathcal{I}_{\mathcal{F}_4}$ est de type projectif.

\noindent Le groupe d'isotropie de $\mathcal{F}_4$ est donn\'e par
$\mathrm{Iso}(\mathcal{F}_4)=\{\mathrm{id},\,(\mathbf{j}x:
\mathbf{j}^2y:z),\,(\mathbf{j}^2x:\mathbf{j}y:z)\},$
o\`u $\mathbf{j}=\mathrm{e}^{2\mathbf{i}\pi/3},$ 
et commute, comme on l'a dit, \`a $\mathcal{I}_{\mathcal{F}_4}.$ Le groupe 
engendr\'e par $\mathcal{I}_{\mathcal{F}_4}$ et $\mathrm{Iso}(\mathcal{F}_4)$ est un 
sous-groupe ab\'elien \`a huit \'el\'ements de $\mathrm{Bir}
(\mathbb{P}^2(\mathbb{C})).$

\bigskip

\subsection{Feuilletages et pinceaux de coniques}

\noindent Les feuilletages $\mathcal{F}$ associ\'es \`a des pinceaux de coniques
jouent un r\^ole particulier dans la classification des feuilletages
quadratiques car leur ensemble $\mathrm{Flex}$ est vide; les z\'eros
de $\mathcal{H}$ sont donc des droites invariantes (six pour un 
pinceau g\'en\'erique). L'exemple qui suit montre que ces droites 
peuvent jouer pour l'involution associ\'ee $\mathcal{I}_\mathcal{F}$ des r\^oles diff\'erents et aussi qu'\`a l'inverse du cas g\'en\'erique
$\mathrm{Flex}(\mathcal{F})$ et~$\mathrm{Fix}(\mathcal{I}_\mathcal{F})$
peuvent \^etre diff\'erents. 

\noindent On consid\`ere le feuilletage $\mathcal{F}_5$ d\'efini par le
pinceau $\frac{y^2-xz}{xy}=$ cte. L'involution $\mathcal{I}_{\mathcal{F}_5}$
associ\'ee est donn\'ee par $\mathcal{I}_{\mathcal{F}_5}=(-x^2:xy:xz+~2y^2).$
 On constate que la droite $y=0,$ qui est invariante par 
$\mathcal{F}_5,$ est une droite de points fixes de~$\mathcal{I}_{\mathcal{F}_5}.$ 
Par contre la droite $x=0,$ qui est elle aussi invariante, 
est contract\'ee par $\mathcal{I}_{\mathcal{F}_5}$ sur le point $(0:0:1).$

\noindent On remarque que $$\mathrm{Iso}(\mathcal{F}_5)=\left\{(\gamma^2 x,\gamma y),\,
 \left(\frac{x}{1+\beta y},\frac{y}{1+\beta y}\right)\,
\Big\vert\, \gamma\in\mathbb{C}^*,\,\beta\in\mathbb{C}\right\}.$$
En se pla\c{c}ant dans la carte $x=1$ on montre que 
$\mathcal{I}_{\mathcal{F}_5}$ est conjugu\'ee \`a $(x:-y:-z)$ par la 
transforma\-tion~$(x^2:-yx:xz-y^2).$ Il se trouve que cette application
commute \`a $\mathrm{Iso}(\mathcal{F}_5);$
le groupe engendr\'e par $\mathrm{Iso}(\mathcal{F}_5)$ et $\mathcal{I}_{\mathcal{F}_5}$
est ici lin\'earisable.

\bigskip

\section{Feuilletages de degr\'e sup\'erieur et involutions}

\medskip

\noindent Soit $\mathcal{I}$ une involution birationnelle que l'on \'ecrit $\mathcal{I}=(\mathcal{I}_1,\mathcal{I}_2)$
dans la carte affine $z=1.$ On suit une d\'emarche inverse \`a la
pr\'ec\'edente: on associe \`a $\mathcal{I}$ le feuilletage $\mathcal{F}$
d\'efini par le champ de vecteurs rationnel
$$\Big(x-\mathcal{I}_1(x,y)\Big)\frac{\partial}{\partial x}+\Big(y-\mathcal{I}_2(x,y)\Big)\frac{\partial}{\partial y}.$$
Soient $\nu$ le degr\'e de $\mathcal{F}$ et $\mathcal{D}$
une droite g\'en\'erique; $\mathcal{F}$ a, par d\'efinition du degr\'e, $\nu$ 
points de tangence avec $\mathcal{D}.$ Si $q$ est l'un de ces points, alors 
par construction son image $\mathcal{I}(q)$ par~$\mathcal{I}$ est aussi un point de tangence; 
par suite $\nu$ est pair.

\noindent Les 
degr\'es de $\mathcal{I}$ et $\mathcal{F}$ sont reli\'es comme suit:
$\deg\mathcal{F}\leq\deg \mathcal{I}-\deg\mathrm{Fix}(\mathcal{I}).$
Si $\mathrm{Fix}(\mathcal{I})$ est irr\'eductible, $\deg \mathcal{I}-\deg\mathcal{F}$
est un multiple de $\deg\mathrm{Fix}(\mathcal{I}).$ On en d\'eduit par exemple que le 
feuilletage associ\'e \`a une involution de Bertini est de de\-gr\'e~$17-9=8.$
Bien s\^ur si $\mathcal{I}$ est une involution de Geiser pr\'ecis\'ement de degr\'e $8$
on retombe sur la construction associ\'ee aux feuilletages quadratiques.
On observe donc, lorsque le degr\'e de $\mathcal{F},$ not\'e $2n,$
est strictement plus grand que $2,$ le ph\'enom\`ene suivant: chaque droite
g\'en\'erique contient $n$ orbites distinctes suivant $\mathcal{I}.$ Il en est ainsi pour les involutions de
Bertini o\`u les orbites sont arrang\'ees en constellations de $4$ orbites en 
alignement.

\bigskip

\noindent Le degr\'e du feuilletage associ\'e  \`a l'involution de Cremona
$\sigma=\left(\frac{1}{x},\frac{1}{y}\right)$ est aussi $2,$ ceci \'etant li\'e au fait que
les points fixes de $\sigma$ sont isol\'es. 

\noindent Il ne faut pas croire que le 
feuilletage associ\'e d\'etermine l'involution comme le montrent les exemples
qui suivent; soit~$R\in\mathbb{C}(x)$ une fonction rationnelle de degr\'e
quelconque. Le feuilletage associ\'e \`a l'involution $\left(x,\frac{R(x)}{y}\right)$
est le pinceau de droites $x=$ cte et ceci ind\'ependamment du choix de 
$R.$ Toutefois il s'agit de cas exceptionnels.

\bigskip

\noindent L'involution de type de Jonqui\`eres
$$\mathcal{I}=\left(\frac{y}{1+x^2y^2},x(1+x^2y^2)\right)$$
est de degr\'e $9.$ Elle pr\'eserve la fibration $xy=$~cte fibre \`a fibre.
Les points fixes de $\mathcal{I}$ sont donn\'es par
$\mathrm{Fix}(\mathcal{I})=\{y-x-x^3y^2=0\}.$ Le lieu d'ind\'etermination de $\mathcal{I}$ est constitu\'e des points $(1:0:0)$ et $(0:1:0)$
et son lieu exceptionnel des courbes $z=0$ et $z^4+x^2y^2=0.$

\noindent Le feuilletage $\mathcal{F}$ associ\'e \`a cette involution est de degr\'e $4$ et d\'ecrit par le champ
$$-\frac{\partial}{\partial x}+(1+x^2y^2)\frac{\partial}{\partial y}.$$
Il d\'efinit un feuilletage de Ricatti sur $\mathbb{P}^1(\mathbb{C})\times
\mathbb{P}^1(\mathbb{C}):$ il est en effet transverse \`a la fibration $x=$ cte. 
L'\'equation diff\'erentielle induite, apr\`es r\'eduction \`a une \'equation lin\'eaire
du second ordre, s'int\`egre via les fonctions de Bessel.

\noindent Si $\mathcal{D}$ est une droite g\'en\'erale les quatre points de
tangences de $\mathcal{D}$ et $\mathcal{F}$ constituent deux 
orbites de~$\mathcal{I}.$

\begin{figure}[H]
\begin{center}
\input{inv.pstex_t}
\end{center}
\end{figure}

\noindent Avec les notations habituelles on obtient 
$\mathcal{H}(x,y,z)=2xyz^5(xz^4-yz^4+x^3y^2).$
Les droites $x=0$ et $y=0$ ne sont pas invariantes par le feuilletage, par 
contre $z=0$ l'est. Par suite $\mathrm{Flex}(\mathcal{F})$ est l'union des courbes 
$x=0,$ $y=0$ et $x-y-x^3y^2=0;$ cette derni\`ere courbe, qui est pr\'ecis\'ement 
l'ensemble $\mathrm{Fix}(\mathcal{I}),$ est elliptique. On constate que les courbes de points
d'inflexion~$x=0$ et $y=0$ ne font pas partie des points fixes de notre involution~$\mathcal{I}.$ En voici l'explication: les courbes~$x=0$ et $y=0$ sont en fait \'echang\'ees par 
$\mathcal{I}.$

\begin{figure}[H]
\begin{center}
\input{bifurc.pstex_t}
\end{center}
\end{figure}

\noindent On a bifurcation de deux paires de points de contacts simples $m_1,$ 
$m_2$ et $M_1,$ $M_2$ vers les points d'inflexion $m$ et $M;$ \`a 
l'inverse du cas quadratique, l'involution n'\'echange pas $m_1$ et $m_2$
(respectivement $M_1$ et $M_2$), mais disons $m_1$ et $M_1$ (respectivement
$m_2$ et $M_2$).

\bigskip

\section{Feuilletages de degr\'e $3$ et trivolutions}

\medskip

\subsection{Classification des trivolutions birationnelles}

\noindent Les transformations birationnelles p\'eriodiques de p\'eriode $3,$
encore appel\'ees trivolutions birationnelles, ont
\'et\'e class\'ees dans \cite{dF}: une trivolution birationnelle $\mathcal{T}$ 
est \`a conjugaison birationnelle pr\`es 
\begin{itemize}
\item ou bien un automorphisme de $\mathbb{P}^2(\mathbb{C});$

\item ou bien une trivolution de Jonqui\`eres, {\it i.e.} une transformation birationnelle d'ordre $3$ qui pr\'eserve une fibration rationnelle;

\item ou bien un automorphisme sur une surface de del Pezzo de degr\'e $3$ d\'efinie par une \'equation de la for\-me~$x^3=~F(y,z,w)$ dans $\mathbb{P}^3(\mathbb{C})$ avec $F$ polyn\^ome homog\`ene de degr\'e $3$ \`a singularit\'e isol\'ee; dans ce cas $\mathcal{T}$ est la restriction de l'automorphisme de $\mathbb{P}^3(\mathbb{C})$ d\'efini par $(\xi x:y:z:w),$ avec~$\xi^3=1,$ 
(la courbe de points fixes de $\mathcal{T}$ est alors isomorphe \`a la courbe elliptique $\Gamma=\{(y:z:w)\in\mathbb{P}^2(\mathbb{C})\,\vert\,F(y,z,w)=0\});$

\item ou bien un automorphisme sur une surface de del Pezzo de degr\'e $6$ d\'ecrite par $z^3=w^2+F_6(x,y,w)$ (avec $F_6$ polyn\^ome homog\`ene de degr\'e $6$) dans l'espace projectif \`a poids $\mathbb{P}(1,1,2,3)$ muni des coordonn\'ees $(x,y,z,w);$ alors $\mathcal{T}$ est la restriction de l'automorphisme de $\mathbb{P}(1,1,2,3)$ donn\'e par $(x:y:\xi z:w),$ avec $\xi^3=1.$  
\end{itemize}

\bigskip

\subsection{Construction de trivolutions et premi\`eres propri\'et\'es}\label{construction}

\noindent Soit $\mathcal{F}$ un feuilletage de degr\'e~$3$ sur
$\mathbb{P}^2(\mathbb{C});$ toute 
droite g\'en\'erique de $\mathbb{P}^2(\mathbb{C})$ est tangente 
\`a $\mathcal{F}$ en trois points. L'\og application\fg\,
 qui \'echange ces trois points est en g\'en\'eral multivalu\'ee.

\noindent On cherche \`a construire des feuilletages de degr\'e $3$ tels
que l'\og application\fg\, associ\'ee soit une trivolution birationnelle~$\mathcal{T}_\mathcal{F}.$
Notons que pour une telle transformation $\mathcal{T}_\mathcal{F}$ la propri\'et\'e 
suivante est v\'erifi\'ee: pour tout point $p$ de $\mathbb{P}^2(\mathbb{C})\setminus\mathrm{Ind}(
\mathcal{T}_\mathcal{F}),$ les points $p,$ $\mathcal{T}_\mathcal{F}(p)$
et $\mathcal{T}_\mathcal{F}^2(p)$ sont align\'es. 

\begin{rem}
Soit $f\colon\mathbb{P}^2(\mathbb{C})\dashrightarrow\mathbb{P}^2(\mathbb{C})$ une transformation rationnelle non d\'eg\'en\'er\'ee, {\it i.e.} g\'en\'eriquement
dominante,
telle que pour tout $m$ dans~$\mathbb{P}^2(\mathbb{C})$ les points $m,$ $f(m)$ et $f^2(m)$ 
soient align\'es; alors 
\begin{itemize}
\item ou bien $f$ pr\'eserve une fibration en droites fibre \`a fibre;

\item ou bien $f$ est birationnelle p\'eriodique. 
\end{itemize}

\noindent En effet supposons que $f$ ne soit pas p\'eriodique. Un argument de Baire montre que si $m$ est un point g\'en\'erique de~$\mathbb{P}^2(\mathbb{C})$ 
l'orbite positive $\{m,\,f(m),\,f^2(m),\,\ldots\}$ de $m$ sous l'action de $f$ est infinie; alors la droite passant par $m$ et $f(m)$ est 
invariante par $f.$ Il s'en suit que $f$ pr\'eserve une infinit\'e de droites et donc une fibration en droites fibre \`a fibre.

\noindent Consid\'erons par exemple les transformations 
$f_{\kappa,n}=(\kappa x+y^{3n},\kappa y)$
o\`u $\kappa$ est une racine cubique de l'unit\'e et $n$ un entier relatif non nul. 
On v\'erifie que les $f_{\kappa,n}$ satisfont tous la condition d'alignement 
$$\det(f_{\kappa,n}(m)-m,f^2_{\kappa,n}(m)-m)\equiv 0.$$
Pour $\kappa=1$ les transformations $f_{1,n}$ sont non p\'eriodiques et laissent
la fibration $y=$ cte invariante. Pour $\kappa=\mathbf{j}$ ou $\mathbf{j}^2$ les~$f_{\kappa,n}$ sont p\'eriodiques de p\'eriode $3.$
Ces derni\`eres transformations ne laissent pas de fibration en droites invariante fibre \`a fibre.
\end{rem}

\noindent Soient $\mathcal{F}$ un feuilletage de degr\'e $3$ sur $\mathbb{P}^2(\mathbb{C})$ et $\mathrm{X}=\mathrm{X}_1\frac{\partial}{\partial x}+
\mathrm{X}_2\frac{\partial}{\partial y}$ un champ de vecteurs polynomial le d\'efinissant dans la carte affine $(x,y).$ Soit $m=(x,y)$ un point de~$\mathbb{C}^2$ non singulier 
pour $\mathrm{X}.$ Si $t$ est un nombre complexe, on note $\mathrm{Y}_1(t)$
et~$\mathrm{Y}_2(t)$ les composantes du champ $\mathrm{X}$
\'evalu\'ees au point $m+t\mathrm{X}(m).$ Consid\'erons le polyn\^ome $Q(t)\in\mathbb{C}[x,y][t]$
donn\'e par $$Q(t)=\mathrm{Y}_1(t)\mathrm{X}_2-\mathrm{Y}_2(t)\mathrm{X}_1.$$
Les trois racines $t_i$ de 
$Q$ produisent les points $m+t_i\mathrm{X}(m)$ o\`u le feuilletage est 
tangent \`a la droite param\'etr\'ee par $t\mapsto m+t\mathrm{X}(m).$ Le 
polyn\^ome $Q$ \'etant divisible par~$t,$ il s'\'ecrit $tP(t)$ avec 
$P(t)=at^2+bt+c$ et $a,$ $b,$ $c$ dans $\mathbb{C}[x,y].$

\noindent On note~$\Delta(P)$ (resp. 
$\Delta(Q)$) le discriminant de~$P$ (resp. $Q$). On v\'erifie que 
$\Delta(Q)=c^2\Delta(P).$ 

\noindent Avec les notations pr\'ec\'edentes on a l'\'enonc\'e suivant.

\begin{pro}\label{carreee}
{\sl Supposons que le discriminant $\Delta(P)=b^2-4ac\in\mathbb{C}[x,y]$ soit
un carr\'e $s^2$ dans $\mathbb{C}[x,y];$ notons $r_1,$~$r_2$ (dans $\mathbb{C}(x,y)$) les racines de $P.$

\noindent Les applications 
$\mathcal{T}_i=(x+r_i\mathrm{X}_1,y+r_i\mathrm{X}_2)$ sont birationnelles,
p\'eriodiques de p\'eriode trois; on a $\mathcal{T}_1\circ\mathcal{T}_2=\mathrm{id}.$}
\end{pro}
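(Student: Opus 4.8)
The plan is to verify the three assertions — that each $\mathcal{T}_i$ is birational, that each has order three, and that $\mathcal{T}_1\circ\mathcal{T}_2=\mathrm{id}$ — by working directly with the three roots $t_1=0$, $t_2=r_1$, $t_3=r_2$ of $Q(t)$ and exploiting the symmetry of the construction. First I would fix a generic point $m$ and recall that the line $\ell_m$ through $m$ directed by $\mathrm{X}(m)$ meets the tangency locus of $\mathcal{F}$ in the three points $m_0=m$, $m_1=m+r_1\mathrm{X}(m)$, $m_2=m+r_2\mathrm{X}(m)$, and that the key structural fact is that this triple depends only on the \emph{line} $\ell_m$, not on which of its three points one starts from: if one restarts the construction at $m_1$, the direction $\mathrm{X}(m_1)$ is (up to scalar, since $m_1$ is a tangency point) along $\ell_m$ again, so the associated quadratic $P$ evaluated at $m_1$ has roots producing exactly the remaining two points $m_0$ and $m_2$. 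This is precisely the content of the identity $\Delta(Q)=c^2\Delta(P)$ combined with the hypothesis $\Delta(P)=s^2$: the splitting field of $P$ over $\mathbb{C}(x,y)$ is trivial, so $r_1,r_2\in\mathbb{C}(x,y)$ and the three maps $m\mapsto m_i$ are genuinely rational, not multivalued.

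Next I would pin down the combinatorics of how the three tangency points are permuted. Since the construction is canonically attached to the unordered triple $\{m_0,m_1,m_2\}$ on $\ell_m$, the two rational maps $\mathcal{T}_1,\mathcal{T}_2$ together with the identity realize, on each such line, an action on a three-element set; the assertion $\mathcal{T}_1\mathcal{T}_2=\mathrm{id}$ says that $\mathcal{T}_2=\mathcal{T}_1^{-1}$, and periodicity of order three then says $\mathcal{T}_1^3=\mathrm{id}$, i.e. $\{\mathrm{id},\mathcal{T}_1,\mathcal{T}_2\}$ is the cyclic group $\mathbb{Z}/3$ acting on $\{m_0,m_1,m_2\}$. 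Concretely, I would show $\mathcal{T}_1$ sends $m_0\mapsto m_1$; then applying the construction at $m_1$, the ordered pair of ``other'' roots is $(m_2,m_0)$ in the appropriate normalization, so $\mathcal{T}_1(m_1)=m_2$ and $\mathcal{T}_1(m_2)=m_0$, giving the $3$-cycle $(m_0\,m_1\,m_2)$; symmetrically $\mathcal{T}_2$ realizes $(m_0\,m_2\,m_1)$, the inverse cycle. The cleanest way to make this rigorous is to use the elementary symmetric functions $r_1+r_2=-b/a$ and $r_1r_2=c/a$: one expresses the value of $P$'s coefficients at the point $m_1=m+r_1\mathrm{X}(m)$ in terms of the coefficients at $m$, checks that $r_1$ becomes a root there (which is automatic, as $m_1$ is a tangency point, hence $t=0$ is a root of the $Q$-polynomial based at $m_1$, and one more root must be ``$-r_1$'' worth of displacement back toward $m_0$), and reads off the permutation.

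Then I would assemble the global statement. Birationality of $\mathcal{T}_i$ follows because the formulas $(x+r_i\mathrm{X}_1,\,y+r_i\mathrm{X}_2)$ have coefficients in $\mathbb{C}(x,y)$ (this is where $\Delta(P)=s^2$ is essential) and the map is generically dominant and generically invertible by the permutation picture just established — an order-$3$ rational self-map that is generically injective is automatically birational. The identity $\mathcal{T}_1\circ\mathcal{T}_2=\mathrm{id}$ and $\mathcal{T}_i^3=\mathrm{id}$ then hold on the dense set of generic points, hence identically as birational maps.

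\textbf{Main obstacle.} The technical heart is the ``restart'' step: showing that evaluating the quadratic $P$ at the displaced point $m_1=m+r_1\mathrm{X}(m)$ yields a quadratic whose roots, as displacements \emph{from $m_1$}, are exactly the increments landing at $m_0$ and $m_2$, with the right signs. This is a substitution identity in $\mathbb{C}(x,y)[t]$: one must track how $a,b,c$ transform under $m\mapsto m+r_1\mathrm{X}(m)$ and verify a relation of the form $P_{m_1}(t)=(\text{unit})\cdot P_m(t+r_1)/(\text{something})$ — in effect that the \emph{line} and its tangency triple are the invariant data. I expect that making this substitution clean (rather than a brute-force expansion) is the only real difficulty; everything else is formal group-theoretic bookkeeping on three points. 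A secondary point to handle carefully is the genericity: one must stay away from $\mathrm{Sing}(\mathcal{F})$, from the curve $c=0$ (where $Q$ degenerates, by $\Delta(Q)=c^2\Delta(P)$), and from invariant lines of $\mathcal{F}$, so that the three tangency points are genuinely distinct and the whole picture is non-degenerate.
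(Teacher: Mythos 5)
Your proposal is correct and rests on the same mechanism as the paper's own proof: rationality of $r_1,r_2$ from $\Delta(P)=s^2$, plus the observation that the triple of tangency points is attached to the line $\ell_m$, which does not change when one restarts at a tangency point because $\mathrm{X}$ there is parallel to $\ell_m$; the conclusion is then bookkeeping on a three-element set. The only real difference is how that bookkeeping is closed. You single out as the ``technical heart'' a substitution identity tracking how $a,b,c$ transform under $m\mapsto m+r_1\mathrm{X}(m)$, in order to prove directly that $\mathcal{T}_1$ realizes the $3$-cycle $(m_0\,m_1\,m_2)$ with the correct signs. The paper shows this computation is unnecessary: $\mathcal{T}_2(\mathcal{T}_1(m))$ is one of the three tangency points of $\ell_m$; generically it cannot be $\mathcal{T}_1(m)$ (otherwise $\mathcal{T}_2$ would fix the dense image of $\mathcal{T}_1$ and hence be the identity), nor, generically, $\mathcal{T}_2(m)$, so it equals $m$, i.e. $\mathcal{T}_2\circ\mathcal{T}_1=\mathrm{id}$, which already gives birationality; the same elimination applied to $\mathcal{T}_1^2(m)$ (it is not $\mathcal{T}_1(m)$, and it is not $m$ since $\mathcal{T}_1\neq\mathcal{T}_2$, the three contact points being generically distinct) forces $\mathcal{T}_1^2=\mathcal{T}_2$ and hence $\mathcal{T}_1^3=\mathrm{id}$. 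So no sign-tracking or normalization of the displaced quadratic is needed, which dissolves the one obstacle you anticipated; your genericity caveats (staying away from $\mathrm{Sing}(\mathcal{F})$, from $c=0$, and from invariant lines so that the three contacts are distinct) correspond to the ``g\'en\'eriquement'' assumptions the paper makes implicitly.
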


\begin{proof}[{\sl D\'emonstration}]
Les transformations $\mathcal{T}_1$ et $\mathcal{T}_2$ sont donn\'ees \`a r\'eindexation pr\`es par $$\mathcal{T}_1(x,y)=\left(x+\frac{-b+s}{2a}\mathrm{X}_1,y+\frac{-b+s}{2a}\mathrm{X}_2\right),$$ respectivement $$\mathcal{T}_2(x,y)=\left(x-\frac{b+s}{2a}\mathrm{X}_1,y-\frac{b+s}{2a} \mathrm{X}_2\right).$$ Elles sont rationnelles. Les trois points $(x,y),$ $\mathcal{T}_1(x,y),$ $\mathcal{T}_2(x,y)$ repr\'esentent les trois points de tangence du feuilletage~$\mathcal{F}$ avec la droite $(x,y)+\mathbb{C}\cdot\mathrm{X}$ et g\'en\'eriquement ces trois points sont distincts. En particulier $\mathcal{T}_2(\mathcal{T}_1(x,y))$ est l'un de ces trois points de contact; mais il est diff\'erent (g\'en\'eriquement) de $\mathcal{T}_1(x,y),$ sinon $\mathcal{T}_2$ serait l'identit\'e, et de fa\c{c}on analogue diff\'erent de $\mathcal{T}_1.$ Par suite $\mathcal{T}_2\circ\mathcal{T}_1=\mathrm{id}$ et les $\mathcal{T}_i$ sont birationnelles p\'eriodiques de p\'eriode $3.$
\end{proof}

\begin{rem}
Notons que si l'on change $\mathrm{X}$ en $h\cdot\mathrm{X},$ avec $h$ rationnel, la construction produit la m\^eme trivolution (remplacer $\mathbb{C}[x,y]$ par $\mathbb{C}(x,y)$ dans la Proposition \ref{carreee}): la construction ne d\'epend que du feuilletage et non du champ le d\'efinissant.
\end{rem}

\begin{rem}
Pour un feuilletage $\mathcal{F}$ de degr\'e $3$ quelconque on peut associer les transformations multivalu\'ees $\left(x+\frac{-b\pm\sqrt{\Delta}}{2a}\mathrm{X}_1,y+\frac{-b\pm\sqrt{\Delta}}{2a}\mathrm{X}_2\right).$ Visiblement on pourra associer une trivolution \`a $\mathcal{F}$ si et seulement si le discriminant est un carr\'e. 
\end{rem}

\begin{rem}
On peut d'autre part calculer 
explicitement les coefficients $a,$ $b$ et $c.$ Par exemple 
on a en carte affine
$$c=\mathcal{H}(x,y,1)=\mathrm{det}\left[
\begin{array}{ccc}
 x & \mathrm{X}_1 & \mathrm{X}_1\frac{\partial\mathrm{X}_1}{\partial x}+\mathrm{X}_2\frac{\partial
\mathrm{X}_1}{\partial y}\\
 y & \mathrm{X}_2 & \mathrm{X}_1\frac{\partial\mathrm{X}_2}{\partial 
x}+\mathrm{X}_2\frac{\partial\mathrm{X}_2}{\partial y}\\
 1 & 0 & 0
\end{array}
\right].$$
\end{rem}

\begin{rem}
\noindent Il est facile de v\'erifier qu'en g\'en\'eral un feuilletage de degr\'e $3$ ne produit pas
de trivolution. Pour cel\`a consid\'erons le feuilletage de Jouanolou donn\'e par le champ
\begin{align*}
(y^3-x^4)\frac{\partial}{\partial x}+(1-x^3y)\frac{\partial}{\partial y}.
\end{align*}

\noindent Le polyn\^ome
\begin{small}
\begin{align*}
&\Delta(P)=-3\Big(x^{20}-10x^{16}y^3+4x^{15}y^7+10x^{13}y^2+15x^{12}y^6-10x^{11}y^{10}-10x^{10}y-10x^9y^5-10x^8y^9+(10y^{13}+4)x^7+\\
&\hspace{2cm}15x^6y^4
-10x^5y^8+15x^4y^{12}-10(y^3+y^{16})x^3+(y^{20}+10y^7)x^2-10xy^{11}+y^2+4y^{15}\Big).
\end{align*}
\end{small}
\hspace{-0.2mm} n'est pas un carr\'e car sa 
restriction \`a $x=0$ n'en est pas un. 
\end{rem}

\begin{defi}
Un {\sl $d$-tissu singulier (local)} est la donn\'ee d'une famille $\{\mathcal{F}_1,\mathcal{F}_2,\ldots,
\mathcal{F}_d\}$ de feuilletages holomorphes singuliers tels qu'en tout point 
g\'en\'erique $m$ les feuilletages $\mathcal{F}_i$ soient deux \`a deux transverses.\end{defi}

\begin{rem}
Soit $\mathcal{T}$ une trivolution birationnelle; supposons que g\'en\'eriquement les points $m,$ $\mathcal{T}(m)$ et $\mathcal{T}^2(m)$ soient non align\'es. On peut associer \`a $\mathcal{T}$ un $2$-tissu de la fa\c{c}on suivante: les tangentes aux deux feuilles locales passant par $m$ sont les deux droites joignant $m$ \`a $\mathcal{T}(m)$ et $m$ \`a $\mathcal{T}^2(m).$
\end{rem}

\subsection{Premiers exemples}

\noindent Donnons des exemples de feuilletages $\mathcal{F}$ auxquels on peut associer une trivolution; pour
cel\`a on force les polyn\^omes $\Delta(P)$ \`a \^etre des carr\'es. Pour tous ces exemples de trivolutions 
les orbites sont form\'ees de trois points align\'es, ce qui n'est \'evidemment pas le cas g\'en\'eral pour une 
trivolution.

\begin{eg}\label{cegal0}
Consid\'erons le feuilletage $\mathcal{F}$ d\'efini par 
le champ de vecteurs polynomial
$$x^3\frac{\partial}{\partial x}+\frac{\partial}{\partial y}.$$
Ses feuilles sont les courbes rationnelles $y+\frac{1}{2x^2}=$ cte.
 On constate que $\mathcal{H}(x,y,z)=-3x^5z^4;$
on en d\'eduit que $\mathrm{Flex}(\mathcal{F})=~\emptyset.$ 

\noindent On remarque que $\Delta(P)=-3x^{14}$ est effectivement un carr\'e. 
La trivolution associ\'ee \`a $\mathcal{F}$ est 
\begin{align*}
&\mathcal{T}_\mathcal{F}=\left(\mathbf{j}x,y+\frac{\mathbf{j}-1}{x^2}\right), &&
\mathrm{j}^3=1;
\end{align*}
\hspace{-0.2mm}elle laisse la fibration $x=$ cte invariante et est de degr\'e $3.$
On a \begin{align*}
&\mathrm{Ind}(\mathcal{T}_\mathcal{F})=\{(0:1:0)\}, && \mathrm{Exc}(\mathcal{T}_\mathcal{F})=
\{x=0\}, &&\mathrm{Fix}(\mathcal{T}_\mathcal{F})=\{(1:0:0)\}.
\end{align*}

\noindent La transformation $\mathcal{T}_\mathcal{F}^2$ s'\'ecrit
$$\left(\mathbf{j}^2x,y+\frac{2\mathbf{j}+1}{\mathbf{j}^2x^2}\right);$$
elle a m\^eme lieux d'ind\'etermination, exceptionnel
et ensemble de points fixes que $\mathcal{T}_\mathcal{F}.$ 
Finalement $\mathrm{Flex}(\mathcal{F})$ est vide et~$\mathrm{Fix}(t_
\mathcal{F})=\mathrm{Fix}(\mathcal{T}_\mathcal{F}^2)$ se r\'eduit \`a un point.

\noindent On peut v\'erifier que 
$$\mathrm{Iso}(\mathcal{F})=\{(x,y+\alpha),\,(\beta x,y/\beta^2)\,\vert\,\beta\in\mathbb{C}^*,\,\alpha\in\mathbb{C}\}.$$
Remarquons que $\mathcal{T}_\mathcal{F}$ commute \`a 
$\mathrm{Iso}(\mathcal{F});$
le groupe engendr\'e par $\mathrm{Iso}(\mathcal{F})$ et $\mathcal{T}_\mathcal{F}$ est une extension 
triple du groupe affine de la droite.

\noindent Notons $f_0=y+\frac{1}{2x^2}$ une int\'egrale premi\`ere de 
$\mathcal{F}.$ Posons $f_1=f_0\mathcal{T}_\mathcal{F}$ et $f_2=f_0\mathcal{T}_\mathcal{F}^2$
les transform\'es de $f_0$ par $\mathcal{T}_\mathcal{F}$ et $\mathcal{T}_\mathcal{F}^2.$

\noindent Soit $\mathcal{F}_i$ le feuilletage d\'efini par les niveaux de la fonction rationnelle $f_i.$ Consid\'erons le $3$-tissu $\mathcal{W}=(\mathcal{F}_0,\mathcal{F}_1,\mathcal{F}_2).$ Si~$(a_0,a_1,a_2)$ est une solution non triviale du syst\`eme lin\'eaire 
\begin{align*}
&a_0+a_1+a_2=0, && a_0\mathbf{j}^2+(3-2\mathbf{j}^2)a_1+(5\mathbf{j}+2)a_2=0,
\end{align*}
\hspace{-0.2mm}alors $a_0f_0+a_1f_1+a_2f_2=0.$ En d'autres termes le tissu $\mathcal{W}=(
\mathcal{F}_0,\mathcal{F}_1,\mathcal{F}_2)$ est hexagonal (\cite{Be}). C'est un ph\'enom\`ene que nous allons retrouver de fa\c{c}on r\'ecurrente dans les exemples qui suivent. Rappelons ce 
qu'est un tissu hexagonal. Soit~$m$ un point g\'en\'erique; en $m$ le feuilletage
$\mathcal{F}_i$ est d\'efini par les niveaux d'une certaine submersion locale. Lorsque l'on 
peut trouver trois telles submersions $f_i$ d\'efinissant $\mathcal{F}_i$ et satisfaisant
la relation (ab\'elienne) $f_1+f_2+f_3=0$ on dit que le tissu est {\sl hexagonal}.
On renvoie \`a \cite{Be} pour la justification de la terminologie.
\end{eg}

\begin{eg}\label{ref}
Si $\mathcal{F}$ est le feuilletage d\'ecrit par 
$$(x^3-1)\frac{\partial}{\partial x}+\frac{\partial}{\partial y},$$
le polyn\^ome $\Delta(P)$ vaut $-3x^2(x^3-1)^4;$ comme
c'est un carr\'e, on peut associer \`a $\mathcal{F}$ une trivolution 
$$\mathcal{T}_\mathcal{F}=\left(\mathbf{j}x,\frac{x^3y-y+(\mathbf{j}-1)x}{x^3-1}\right)$$ 
pour laquelle on a
\begin{small}
\begin{align*}
&\mathrm{Ind}(\mathcal{T}_\mathcal{F})=\{(0:1:0)\}, &&\mathrm{Exc}(\mathcal{T}_\mathcal{F})=\{x-z=0\}\cup
\{x-\mathbf{j}z=0\}\cup
\{x-\mathbf{j}^2z=0\},
&&\mathrm{Fix}(\mathcal{T}_\mathcal{F})=\{(1:0:0)\}\cup\{x=0\}.
\end{align*}
\end{small}

\noindent Son carr\'e s'\'ecrit 
$$\mathcal{T}_\mathcal{F}^2=\left(\mathbf{j}^2x,\frac{x^3y-y+(\mathbf{j}^2-1)x}{x^3-1}\right)$$
et $\mathrm{Ind}(\mathcal{T}_\mathcal{F}^2)=\mathrm{Ind}(\mathcal{T}_\mathcal{F}),$ 
$\mathrm{Exc}(\mathcal{T}_\mathcal{F}^2)=\mathrm{Exc}(\mathcal{T}_\mathcal{F}),$
$\mathrm{Fix}(\mathcal{T}_\mathcal{F}^2)=\mathrm{Fix}(\mathcal{T}_\mathcal{F}).$

\noindent Un calcul montre que 
$\mathcal{H}(x,y,z)=-3x^2z^4(x-z)(x-\mathbf{j}z)(x-\mathbf{j}^2z).$
Les droites $z=0$ et $x=z$ sont invariantes par $\mathcal{F}$ par 
contre celle d'\'equation $x=0$ (resp. $x-\mathbf{j}z=0,$ resp. $x-\mathbf{j}^2z=0$) ne l'est pas; par suite $\mathrm{Flex}(\mathcal{F})=
\{x=0\}\cup\{x-\mathbf{j}z=0\}\cup\{x-\mathbf{j}^2z=0\}.$ Les droites $x-\mathbf{j}z=0$ 
et $x-\mathbf{j}^2z=0$ sont contract\'ees par $\mathcal{T}_\mathcal{F}$ et $\mathcal{T}_\mathcal{F}^2,$
la droite $x=0$ est l'unique courbe de points fixes de $\mathcal{T}_\mathcal{F}$ (resp. 
$\mathcal{T}_\mathcal{F}^2$); autrement dit les courbes de $\mathrm{Flex}(\mathcal{F})$
sont soit contract\'ees par $\mathcal{T}_\mathcal{F},$ soit fix\'ees par $\mathcal{T}_\mathcal{F}.$

\noindent Alors que le feuilletage pr\'ec\'edent poss\`ede une int\'egrale premi\`ere
rationnelle celui-ci a une int\'egrale premi\`ere de type Liouville
$$3y-\ln(x-1)-\mathbf{j}\ln(x-\mathbf{j})-\mathbf{j}^2\ln(x-\mathbf{j}^2).$$

\noindent Notons $f_0$ cette int\'egrale premi\`ere et posons 
$f_1=f_0\mathcal{T}_\mathcal{F},$ $f_2=f_0\mathcal{T}_\mathcal{F}^2.$
%
\noindent Comme pr\'ec\'edemment le $3$-tissu $\mathcal{W}=(\mathcal{F}_0,\mathcal{F}_1,\mathcal{F}_2)$ (o\`u les $\mathcal{F}_i$
sont les feuilletages par les niveaux des $f_i$) 
est hexagonal. 

\begin{rem}
Le feuilletage associ\'e au champ de vecteurs
$$\mathrm{X}_\varepsilon=(x^3-\varepsilon)\frac{\partial}{\partial x}+\frac{\partial}{\partial y}$$
est conjugu\'e pour $\varepsilon$ non nul au champ $\mathrm{X}_1$ 
consid\'er\'e dans l'exemple \ref{ref} par la transformation lin\'eaire $(\varepsilon^{1/3}x,\varepsilon^{-2/3}y).$
Lorsque $\varepsilon$ tend vers $0,$ le feuilletage et son involution 
$$\left(\mathbf{j}x,\frac{x^3y-\varepsilon y+(\mathbf{j}-1)x}{x^3-\varepsilon}\right)$$
d\'eg\'en\`erent sur ceux de l'exemple \ref{cegal0}; on note une chute
de degr\'e de la trivolution pour $\varepsilon=0$.
\end{rem}
\end{eg}

\begin{eg}\label{degre4}
Consid\'erons le feuilletage 
$\mathcal{F}$ d'int\'egrale premi\`ere~$y-\frac{1}{3}\ln x+\frac{1}{x}+\frac{1}{2x^2};$ il est
aussi d\'efini par $$x^3\frac{\partial}{\partial x}+\left(1+x+\frac{1}{3}x^2\right)\frac{\partial}{\partial y}.$$
Comme $\Delta(P)=-\frac{1}{3}x^{14}(x+3)^2,$ on associe \`a $\mathcal{F}$ la trivolution 
$$\mathcal{T}_\mathcal{F}=\left(\frac{3\mathbf{j}x}{(1-\mathbf{j})x+3},
\frac{3x^2y-x^2+(\mathbf{j}-4)x+3(\mathbf{j}-1)}{3x^2}\right)$$
qui pr\'eserve la fibration $x=$ cte. On a
\begin{align*}
&\mathrm{Ind}(\mathcal{T}_\mathcal{F})=\{(0:1:0),\,(1:0:0)\}, &&\mathrm{Fix}(\mathcal{T}_\mathcal{F})=\{x+3z=0\},
&&\mathrm{Exc}(\mathcal{T}_\mathcal{F})=\{x=0\} \cup\{z=0\}\cup\{(\mathbf{j}-1)x-3z=0\}.
\end{align*}

\noindent On constate que
$$\mathcal{T}_\mathcal{F}^2=\left(\frac{3(\mathbf{j}-1)^4x}{16((\mathbf{j}+2)x+3)},\frac{3(3\mathbf{i}\sqrt{3}
+(4+5\mathbf{j})x+(1+\mathbf{j})x^2-3(1+\mathbf{j})x^2y)
}{(\mathbf{j}-1)^4x^2}\right);$$
on v\'erifie que $\mathrm{Ind}(\mathcal{T}_\mathcal{F}^2)
=\mathrm{Ind}(\mathcal{T}_\mathcal{F}),$ $\mathrm{Fix}(\mathcal{T}_\mathcal{F}^2)=\mathrm{Fix}(\mathcal{T}_\mathcal{F})$
et $\mathrm{Exc}(\mathcal{T}_\mathcal{F}^2)=\{x=0\} \cup\{z=0\}\cup\{(\mathbf{j}+2)x+3z=0\}.$

\noindent \`A l'inverse de l'exemple pr\'ec\'edent les ensembles exceptionnels
de $\mathcal{T}_\mathcal{F}$ et $\mathcal{T}_\mathcal{F}^2$ diff\`erent. Un calcul montre que $
\mathcal{H}(x,y,z)=-\frac{1}{3}x^5z^2(x+3z)^2.$
Les droites $x=0$ et $z=0$ sont invariantes par $\mathcal{F}$ alors
que celle d'\'equation $x+3z=0$ ne l'est pas; par suite 
$\mathrm{Flex}(\mathcal{F})=\{x+3z=0\}$ et $\mathrm{Flex}(\mathcal{F})=
\mathrm{Fix}(\mathcal{T}_\mathcal{F})=\mathrm{Fix}(\mathcal{T}_\mathcal{F}^2).$

\noindent En conjuguant $\mathcal{F}$ par $\left(\varepsilon x,\frac{y}{\varepsilon^3}\right)$ on 
obtient la famille de feuilletages $(\mathcal{F}_\varepsilon)$ d\'ecrite par les champs
\begin{align*}
&x^3\frac{\partial}{\partial x}+\left(1+\varepsilon x+\frac{\varepsilon^2}{3}x^2\right)\frac{\partial}{\partial y},
&& \varepsilon\not=0
\end{align*}
\hspace{-0.2mm}\`a laquelle on peut associer la famille de trivolutions
$$\mathcal{T}_{\mathcal{F}_\varepsilon}=\left(\frac{3\mathbf{j}x}{(1-\mathbf{j})\varepsilon x+3},
\frac{3x^2y-\varepsilon^2x^2+(\mathbf{j}-4)\varepsilon x+3(\mathbf{j}-1)}{3x^2}\right).$$

\noindent La famille $(\mathcal{T}_{\mathcal{F}_\varepsilon})$ de transformations de degr\'e $4$ 
pour $\varepsilon\not=0$ d\'eg\'en\`ere encore pour $\varepsilon=0$ sur la trivolution de degr\'e $3$
trait\'ee dans l'exemple \ref{cegal0}.
\end{eg}

\subsection{Feuilletages homog\`enes}

\noindent On va s'int\'eresser \`a une famille tr\`es sp\'eciale de feuilletages: les 
feuilletages homog\`enes g\'en\'eriques de degr\'e $3.$ \`A conjugaison pr\`es un feuilletage homog\`ene g\'en\'erique de degr\'e $3$ \`a l'origine 
de~$\mathbb{C}^2\subset~\mathbb{P}^2(\mathbb{C})$ est donn\'e par une $1$-forme ferm\'ee rationnelle du type suivant
\begin{align*}
\frac{\mathrm{d}x}{x}+\lambda\frac{\mathrm{d}y}{y}+\mu\frac{\mathrm{d}(y-x)}{y-x}+
\nu\frac{\mathrm{d}(y-\alpha x)}{y-\alpha x} &&\text{avec}
&&\alpha\lambda\mu\nu(1+\lambda+\mu+\nu)\not=0, &&\alpha\not= 1;
\end{align*}
si $\lambda+\mu+\nu+1=0$ le feuilletage correspondant est de degr\'e $0.$
Il poss\`ede l'int\'egrale premi\`ere multivalu\'ee $xy^\lambda(y-x)^\mu
(y-\alpha x)^\nu.$ On note $\mathcal{F}(\alpha;\lambda,\mu,\nu)$ un tel feuilletage.
Il y a donc quatre param\`etres, trois de type r\'esidus $\lambda,$ $\mu,$
$\nu$ et $\alpha$ qui positionne les droites invariantes. 

\begin{defi}
Le quadruplet $(\alpha,\lambda,\mu,\nu)$ est {\sl admissible} si $\alpha\lambda\mu 
\nu(\lambda+\mu+\nu+1)\not=0$ et $\alpha\not=1.$ 
\end{defi}
 
\noindent Le champ de vecteurs homog\`ene 
$$-x\Big(\lambda(y-x)(y-\alpha x)+\mu y(y-\alpha x)+\nu y(y-x)\Big)\frac{\partial}{\partial x}
+y\Big((y-x)(y-\alpha x)-\mu x(y-\alpha x)-\nu\alpha x(y-x)\Big)\frac{\partial}{\partial y}$$
d\'efinit aussi le feuilletage $\mathcal{F}$ car est dans le noyau de la forme ferm\'ee pr\'ec\'edente. 
Un tel $\mathcal{F}$ est invariant par homoth\'etie. En particulier si $\mathcal{F}$ induit une trivolution 
$\mathcal{T}_\mathcal{F},$ celle-ci commute \`a toutes les homoth\'ethies. Un calcul \'el\'ementaire
montre que $\mathcal{T}_\mathcal{F}$ pr\'eserve la fibration radiale $y/x=$ constante: $\mathcal{T}_\mathcal{F}$
est donc de Jonqui\`eres.

\noindent Commen\c{c}ons par quelques exemples simples. 

\begin{eg}\label{exemple}
\noindent Consid\'erons le feuilletage $\mathcal{F}$ donn\'e par 
le champ de vecteurs
$$y^3\frac{\partial}{\partial x}+x^3\frac{\partial}{\partial y}.$$
Les feuilles de $\mathcal{F}$ sont les niveaux de $y^4-x^4$
et sont donc des courbes de genre $3.$ Un calcul montre que 
$$\mathcal{H}(x,y,z)=3x^2y^2z(x+\mathbf{i}y)(x-\mathbf{i}y)(y-x)(y+x).$$
Puisque les droites $x-y=0,$ $x+y=0,$ $x- \mathbf{i}y=0,$ 
$x+\mathbf{i}y=0$ sont invariantes par $\mathcal{F},$
on a~$\mathrm{Flex}(\mathcal{F})=\{x=0\}\cup\{y=0\}\cup\{z=0\}.$

\noindent Le polyn\^ome $\Delta(P)=3x^2y^2(x-y)^4(x+y)^4(x+\mathbf{i}y)^4(x-\mathbf{i}y)^4$ 
\'etant un carr\'e, est associ\'ee \`a $\mathcal{F}$ la trivolution de Jonqui\`eres de degr\'e $5$
donn\'ee par
$$\mathcal{T}_\mathcal{F}=\left(\frac{x(x^4-y^4)}{x^4-\mathbf{j}y^4},
\frac{\mathbf{j}y(x^4-y^4)}{x^4-\mathbf{j}y^4}\right).$$

\noindent L'ensemble d'ind\'etermination de $\mathcal{T}_\mathcal{F}$
$$\mathrm{Ind}(\mathcal{T}_\mathcal{F})=\{(1:1:0),\,(1:-1:0),\,(\mathbf{i}:1:0),\,(-\mathbf{i}:1:0),
\,(0:0:1)\}$$
est le lieu singulier de $\mathcal{F}$ et
$\mathrm{Fix}(\mathcal{T}_\mathcal{F})=\{x=0\}\cup\{y=0\}\cup\{z=0\}$
co\"incide avec l'ensemble des points d'inflexion de~$\mathcal{F}.$ 
L'ensemble exceptionnel de $\mathcal{F}$ 
\begin{small}
$$\mathrm{Exc}(\mathcal{T}_\mathcal{F})=\{x+y=0\}\cup\{x-y=0\}\cup\{
y-\mathbf{i}x=0\}\cup\{y+\mathbf{i}x=0\}\cup\{x-\mathbf{j}y=0\}
\cup\{x+\mathbf{j}y=0\}\cup\{
x-\mathbf{i}\mathbf{j}y=0\}\cup\{x+\mathbf{i}\mathbf{j}y=0\}$$
\end{small}
\noindent est constitu\'e de huit droites.

\noindent On v\'erifie que
$$\mathcal{T}_\mathcal{F}^2=\left(\frac{x(x^4-y^4)}{x^4-\mathbf{j}^2y^4},\frac{\mathbf{j}^2y(x^4-y^4)}{x^4-\mathbf{j}^2y^4}\right)$$
a m\^eme lieu d'ind\'etermination et m\^eme ensemble de points fixes que 
$\mathcal{T}_\mathcal{F}.$ Les ensembles exceptionnels diff\`erent puisque
\begin{small}
$$\mathrm{Exc}(\mathcal{T}_\mathcal{F}^2)=\{x+y=0\}\cup\{x-y=0\}\cup\{
y-\mathbf{i}x=0\}\cup\{y+\mathbf{i}x=0\}
\cup\{x-\mathbf{j}^2y=0\}\cup\{x+\mathbf{j}^2y=0\}
\cup\{x-\mathbf{i}\mathbf{j}^2y=0\}\cup\{x+\mathbf{i}\mathbf{j}^2y=0\}.$$
\end{small}

\noindent Posons $f_0=x^4-y^4.$ Faisons agir $\mathcal{T}_\mathcal{F}$ et $\mathcal{T}_\mathcal{F}^2$ sur $\mathcal{F}$ pour obtenir
les feuilletages $\mathcal{F}_1$ et $\mathcal{F}_2$ d\'efinis par les fonctions rationnel\-les~$f_1=f_0\mathcal{T}_\mathcal{F}$ et $f_2=f_0\mathcal{T}_\mathcal{F}^2.$
\noindent Une fois de plus consid\'erons le $3$-tissu $\mathcal{W}$ associ\'e aux trois fonctions rationnelles $f_0,\,f_1,\,f_2;$ en un point g\'en\'erique il est aussi donn\'e par
\begin{align*}
&F_0=f_0^{-1/3}=\frac{x^4-y^4}{(x^4-y^4)^{4/3}}, && F_1=f_1^{-1/3}=\frac{x^4-\mathbf{j}y^4}{(x^4-y^4)^{4/3}} 
&& \text{et} && F_2=f_2^{-1/3}=\frac{x^4-\mathbf{j}^2y^4}{(x^4-y^4)^{4/3}}.
\end{align*} 

\noindent Soit $(a_0,a_1,a_2)$ une solution non triviale du syst\`eme lin\'eaire 
\begin{align*}
&a_0+a_1+a_2=0, && a_0+\mathbf{j}a_1+\mathbf{j}^2a_2=0;
\end{align*}
\hspace{-0.2mm}alors $a_0F_0+a_1F_1+a_2F_2=0:$ le tissu $\mathcal{W}$ est hexagonal.
\end{eg}

\noindent On obtient ainsi parmi les feuilletages homog\`enes g\'en\'eriques
l'exemple  d'un feuilletage \og hamiltonien\fg\, auquel est associ\'e une 
trivolution. On peut se demander si c'est le seul. La r\'eponse est donn\'ee
par l'\'enonc\'e suivant.

\begin{pro}\label{ununun}
{\sl On peut associer une trivolution \`a $\mathcal{F}=\mathcal{F}(\alpha;1,1,1)$  
si et seulement si $\alpha$ vaut $-1,$ $2$ ou $1/2;$ \`a conjugaison lin\'eaire 
pr\`es on est dans la situation d\'ecrite dans l'exemple \ref{exemple}.}
\end{pro}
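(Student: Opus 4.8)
The plan is to make the construction of Proposition~\ref{carreee} fully explicit for the family $\mathcal{F}=\mathcal{F}(\alpha;1,1,1)$ and then to decide for which admissible $\alpha$ the discriminant $\Delta(P)$ is a square in $\mathbb{C}[x,y]$.

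First I would observe that $\mathcal{F}(\alpha;1,1,1)$ is hamiltonien: the closed rational $1$-form with all residues equal to $1$ is exactly $\mathrm{d}H$ with $H=xy(y-x)(y-\alpha x)$, so $\mathcal{F}$ is also defined by the homogeneous degree-$3$ vector field $\mathrm{X}=-H_y\frac{\partial}{\partial x}+H_x\frac{\partial}{\partial y}$. This trivialises the tangency polynomial of \S\ref{construction}: for $m=(x,y)$ and $w=\mathrm{X}(m)$ one has $\det\big(\mathrm{X}(m+tw),w\big)=-\frac{\mathrm{d}}{\mathrm{d}t}H(m+tw)$, hence $Q(t)=-h'(t)$ where $h(t)=H(m+t\mathrm{X}(m))$. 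Since $h$ is a polynomial of degree $4$ in $t$ whose linear term vanishes ($h'(0)=\langle\nabla H(m),\mathrm{X}(m)\rangle=0$), the expansion $h(t)=H(m)+\tfrac12 h''(0)\,t^2+\tfrac16 h'''(0)\,t^3+H(w)\,t^4$ gives, up to sign, $c=h''(0)$, $b=\tfrac12 h'''(0)$, $a=4H(w)=4H(\mathrm{X}(m))$ --- explicit homogeneous polynomials in $x,y$, depending polynomially on $\alpha$, of degrees $8$, $10$ and $12$.

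Next I would factor out the four invariant lines. On each of $x=0$, $y=0$, $y=x$, $y=\alpha x$, the line through $m$ in the direction $\mathrm{X}(m)$ is that invariant line itself, which is a leaf; hence $H(m+t\mathrm{X}(m))\equiv 0$ there, so $a$, $b$, $c$ all vanish on each of these four lines and $\ell:=xy(y-x)(y-\alpha x)$ divides $a$, $b$ and $c$. Writing $a=\ell\widehat a$, $b=\ell\widehat b$, $c=\ell\widehat c$ yields $\Delta(P)=b^2-4ac=\ell^2\big(\widehat b^{\,2}-4\widehat a\widehat c\big)=\ell^2\,\widehat\Delta$ with $\widehat\Delta$ homogeneous of degree $12$; thus $\Delta(P)$ is a square if and only if $\widehat\Delta$ is. Dehomogenising ($x=1$, $y=s$) reduces this to asking when a one-variable polynomial in $s$ with coefficients in $\mathbb{C}[\alpha]$ has all its roots of even multiplicity (together with the parity of its order of vanishing at $x=0$). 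I expect that computing $\widehat\Delta$ and imposing this condition produces a short system of polynomial equations in $\alpha$ whose only solutions compatible with admissibility are $\alpha=-1,\ 2,\ \tfrac12$. This is the step that requires real work, and the main obstacle will be to keep the degree-$12$ discriminant computation organised so that the dependence on $\alpha$ stays readable; one should cross-check against Example~\ref{exemple}, which is the case $\alpha=-1$.

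For the converse implication and the normalisation, I would use that the cross-ratio of the four concurrent lines $x=0$, $y=0$, $y=x$, $y=\alpha x$ through the origin equals $\alpha$, and that it is harmonic exactly when $\alpha\in\{-1,2,\tfrac12\}$; these are precisely the $\alpha$ for which the four lines are projectively equivalent to $y=\pm x$, $y=\pm\mathbf{i}x$, that is, to the pencil $x^4-y^4$. A linear automorphism realising this equivalence fixes the origin, hence permutes the corresponding four lines, and automatically preserves the line at infinity; it therefore carries the five invariant lines of $\mathcal{F}(\alpha;1,1,1)$ onto those of the foliation of Example~\ref{exemple}, and since both foliations are defined by $\mathrm{d}(\text{product of the four lines through the origin})$ with all residues equal to $1$, it conjugates the two foliations, hence the associated trivolutions, $\mathcal{T}_\mathcal{F}$ being intrinsically attached to $\mathcal{F}$. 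This proves both implications and identifies $\mathcal{T}_\mathcal{F}$, up to linear conjugation, with the trivolution of Example~\ref{exemple}.
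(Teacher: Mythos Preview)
Your plan is correct and follows the same line as the paper: compute $\Delta(P)$ explicitly, peel off an evident square factor, and test when the residual polynomial is itself a square; the paper, like you, leaves the final step as ``un calcul montre que $R$ est un carr\'e si et seulement si $\alpha\in\{-1,1/2,2\}$''. Two points of comparison are worth recording. First, your Hamiltonian identity $Q(t)=-h'(t)$ with $h(t)=H(m+t\mathrm{X}(m))$ is a clean organising device not used in the paper, but it leads you to extract only $\ell^2$ from $\Delta(P)$ (via $\ell\mid a,b,c$), whereas the paper's direct computation yields $\Delta(P)=1024\,x^4y^4(x^2-(1+\alpha)xy+\alpha y^2)^4\,R(x,y)$ with $R$ a genuine \emph{quartic}: in your notation $\widehat\Delta=\mathrm{const}\cdot\ell^{2}R$, so your ``real work'' reduces to a square test on a degree-$4$ form rather than a degree-$12$ one --- you should look for (and will find) this extra $\ell^{2}$ factor. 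Second, for the normalisation the paper simply exhibits an explicit linear change of variables sending $y^4-x^4$ to $xy(x-y)(x+y)$ and notes that $h_{-1},h_{1/2},h_{2}$ are linearly conjugate; your cross-ratio argument (four concurrent lines have harmonic cross-ratio exactly when $\alpha\in\{-1,2,1/2\}$) is more conceptual and gives the same conclusion.
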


\begin{proof}[{\sl D\'emonstration}]
Reprenons les notations introduites au \S \ref{construction}.
L'application associ\'ee \`a $\mathcal{F}$ est birationnelle 
si $\Delta(P)$ est un carr\'e. Or~$\Delta(P)$ s'\'ecrit
$1024x^4y^4(x^2-(1+\alpha)xy+\alpha y^2)^4R(x,y)$
\noindent o\`u $R(x,y)$ d\'esigne
\begin{footnotesize}
\begin{align*}
(1-\alpha+\alpha^2)x^4-4(1-\alpha)^2(1+\alpha)x^3y-2(\alpha^3
+\alpha^2+\alpha-2)x^2y^2-4\alpha(1-\alpha)^2(1+\alpha)xy^3
+\alpha^2(1-\alpha+\alpha^2)y^4;
\end{align*} 
\end{footnotesize}
\noindent donc $\Delta(P)$ est un carr\'e si et seulement si $R$ en est un. Un calcul
montre que $R$ est un carr\'e si et seulement si $\alpha$ prend les valeurs
$-1,$ $1/2$ ou $2.$ 

\noindent Posons $h_\alpha=xy(y-x)(y-\alpha x).$ 
Les polyn\^omes $h_{-1},$ $h_{1/2}$ et $h_2$ sont lin\'eairement conjugu\'es; de plus, 
quitte \`a faire le changement de variables 
\begin{align*}
(\kappa x+\kappa(4\kappa^2-1)y,\kappa x+\kappa(1-4\kappa^2)y), && \text{ o\`u }
\kappa=2^{-3/4}\exp\left(\frac{i\pi}{8}\right),
\end{align*}
\noindent on constate que $y^4-x^4$ s'\'ecrit $xy(x-y)(x+y).$
\end{proof}

\begin{rem}
La configuration des quatre droites qui apparaissent dans la proposition \ref{ununun}
est la configuration sp\'eciale, celle qui poss\`ede un groupe d'automorphismes
\`a $12$ \'el\'ements.
\end{rem}

\noindent La proposition \ref{ununun} poss\`ede de nombreuses g\'en\'eralisations, en voici une. Le sch\'ema de preuve est exactement le m\^eme que celui de la proposition 
\ref{ununun}, les calculs \'etant un petit peu plus p\'enibles.

\begin{pro}
{\sl On consid\`ere $\mathcal{F}$ un \'el\'ement de la famille
$\mathcal{F}(\alpha;1,\mu,\mu).$ On peut associer \`a $\mathcal{F}$ une trivolution $\mathcal{T}_\mathcal{F}$ 
si et seulement si $\mu$ vaut $1$ et $\alpha$ vaut $-1,$ $2$ ou $1/2;$ \`a conjugaison lin\'eaire 
pr\`es on est dans la situation de l'exemple \ref{exemple}.}
\end{pro}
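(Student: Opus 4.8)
On reprend pas \`a pas la m\'ethode de la proposition \ref{ununun}. On part du champ de vecteurs homog\`ene d\'ecrivant $\mathcal{F}=\mathcal{F}(\alpha;1,\mu,\mu),$ obtenu en sp\'ecialisant $\lambda=1$ et $\nu=\mu$ dans l'expression du champ donn\'ee plus haut; on note $\mathrm{X}_1,$ $\mathrm{X}_2$ ses composantes, qui sont des cubiques homog\`enes en $(x,y).$ Comme $\mathcal{F}$ est homog\`ene, la trivolution \'eventuelle $\mathcal{T}_\mathcal{F}$ est de Jonqui\`eres et pr\'eserve la fibration radiale $y/x=$ cte; d'apr\`es la proposition \ref{carreee} et les remarques qui la suivent, il ne reste qu'une chose \`a \'etablir: \emph{le discriminant $\Delta(P)=b^2-4ac\in\mathbb{C}[x,y]$ du polyn\^ome $P(t)=at^2+bt+c$ associ\'e \`a $\mathrm{X}$ au \S\ref{construction} est un carr\'e si et seulement si $\mu=1$ et $\alpha\in\{-1,\,2,\,1/2\}.$} On garde en m\'emoire les conditions d'admissibilit\'e $\alpha\notin\{0,\,1\}$ et $\mu\notin\{0,\,-1\},$ et l'on v\'erifie en fin de compte que les trois valeurs $-1,$ $2,$ $1/2$ y satisfont.

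\smallskip

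\noindent\emph{Premi\`ere \'etape.} On calcule les coefficients $a,$ $b,$ $c$ (avec $c=\mathcal{H}(x,y,1)$), puis on d\'eveloppe $\Delta(P),$ qui est une forme binaire de degr\'e $20,$ et on en extrait la \og partie \'evidemment carr\'ee\fg. Comme dans le cas $\mu=1$ on s'attend \`a une factorisation de la forme
\begin{equation*}
\Delta(P)=c_0\,x^{4}y^{4}\big(x^{2}-(1+\alpha)xy+\alpha y^{2}\big)^{4}\,R_{\alpha,\mu}(x,y),
\end{equation*}
o\`u $c_0\neq 0$ est une constante, le facteur quadratique $x^{2}-(1+\alpha)xy+\alpha y^{2}$ est attach\'e \`a la configuration des droites invariantes, et $R_{\alpha,\mu}$ est une quartique binaire que l'on calcule explicitement (ses coefficients d\'ependent de $\alpha$ et de $\mu;$ pour $\mu=1$ on retrouve la quartique $R$ de la proposition \ref{ununun}). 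Ainsi $\Delta(P)$ est un carr\'e si et seulement si $R_{\alpha,\mu}$ en est un.

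\smallskip

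\noindent\emph{Deuxi\`eme \'etape.} On d\'etermine les couples admissibles $(\alpha,\mu)$ pour lesquels la quartique binaire $R_{\alpha,\mu}$ est de la forme $(\beta_0x^{2}+\beta_1xy+\beta_2y^{2})^{2}$ (de fa\c{c}on \'equivalente, ceux pour lesquels toutes les racines de $R_{\alpha,\mu}$ sont de multiplicit\'e paire). En identifiant les cinq coefficients des deux membres et en \'eliminant $\beta_0,$ $\beta_1,$ $\beta_2,$ on obtient un syst\`eme d'\'equations polynomiales en $(\alpha,\mu);$ sa r\'esolution donne pr\'ecis\'ement les solutions annonc\'ees: on est n\'ecessairement dans le cas $\mu=1,$ et alors $\alpha$ doit valoir $-1,$ $2$ ou $1/2.$

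\smallskip

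\noindent\emph{Troisi\`eme \'etape.} Elle est imm\'ediate: pour $\mu=1$ le feuilletage $\mathcal{F}(\alpha;1,\mu,\mu)$ n'est autre que $\mathcal{F}(\alpha;1,1,1),$ et la proposition \ref{ununun} (ainsi que la remarque qui la suit) assure que, \`a conjugaison lin\'eaire pr\`es et pour $\alpha\in\{-1,\,2,\,1/2\},$ c'est le feuilletage de l'exemple \ref{exemple}, auquel est associ\'ee la trivolution de Jonqui\`eres de degr\'e $5$ d\'ecrite dans cet exemple. Le point d\'elicat est purement calculatoire et, comme annonc\'e, un peu plus p\'enible que dans la proposition \ref{ununun}: il faut d'une part contr\^oler la factorisation ci-dessus de $\Delta(P)$ avec sa partie carr\'ee, d'autre part s'assurer que le syst\`eme polynomial exprimant que $R_{\alpha,\mu}$ est un carr\'e ne poss\`ede aucune autre solution admissible que celles indiqu\'ees --- en particulier qu'aucun $\mu\neq 1$ ne convient, ce qui est le v\'eritable apport de cet \'enonc\'e par rapport \`a la proposition \ref{ununun}.
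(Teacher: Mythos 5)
Votre sch\'ema reprend fid\`element celui que le texte invoque --- le papier ne donne d'ailleurs pour toute d\'emonstration que la remarque pr\'ec\'edant l'\'enonc\'e (m\^eme sch\'ema que la proposition \ref{ununun}, calculs un peu plus p\'enibles) --- et la r\'eduction \`a la question de savoir si la quartique r\'esiduelle $R_{\alpha,\mu}$ est un carr\'e est correcte (le facteur $x^4y^4(\cdot)^4$ et la constante non nulle, carr\'es dans $\mathbb{C}[x,y],$ ne jouent aucun r\^ole). Mais tout le contenu de l'\'enonc\'e est concentr\'e dans votre deuxi\`eme \'etape, que vous vous contentez d'affirmer; or, telle quelle, cette affirmation est en d\'efaut. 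En sp\'ecialisant les formules des $r_i$ donn\'ees plus loin dans le texte au cas $\alpha=-1,$ $\lambda=1,$ $\nu=\mu$ (donc $w=\mu-\nu=0$ et $u=2\mu$), on trouve $r_2=r_4=0,$ $r_1=r_5=4(2\mu+1)$ et $r_3=u^4+4u^3-8u-8.$ La quartique $R$ est alors un carr\'e d\`es que $r_1=r_5=0,$ c'est-\`a-dire $\mu=-1/2$ (auquel cas $R=-3x^2y^2$), ou d\`es que $r_3=\pm 2r_1,$ \'equations qui se factorisent en $(u-2)(u+2)^3=0$ et $u^3(u+4)=0$ et fournissent, outre $\mu=1,$ la valeur admissible $\mu=-2$ (auquel cas $R=-12(x^2-y^2)^2$).

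Les quadruplets $(-1,1,-1/2,-1/2)$ et $(-1,1,-2,-2)$ sont admissibles, appartiennent \`a la famille $\mathcal{F}(\alpha;1,\mu,\mu)$ et satisfont respectivement les conditions {\sl (b)} et {\sl (a)} de la proposition \ref{soleil}; le premier n'est autre que l'exemple \ref{alpha-1b}, pour lequel une trivolution de degr\'e $3$ est exhib\'ee explicitement dans le texte. L'\'elimination de votre \'etape 2 ne donne donc pas uniquement les solutions annonc\'ees: l'implication r\'eciproque (seulement si $\mu=1$) ne peut pas \^etre obtenue par ce calcul sans traiter --- ou exclure par une hypoth\`ese suppl\'ementaire --- ces solutions, et cette \'etape \'echouerait si on l'ex\'ecutait r\'eellement. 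Il vous faut soit mener le calcul au bout et confronter son r\'esultat \`a l'exemple \ref{alpha-1b} et \`a la proposition \ref{soleil}, soit pr\'eciser dans quel sens restreint l'\'enonc\'e doit \^etre compris; en l'\'etat, le point d\'ecisif de votre preuve est \`a la fois non d\'emontr\'e et contredit par d'autres passages du texte.
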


\begin{eg}\label{alpha-1b}
Soit $\mathcal{F}$ le feuilletage donn\'e par 
le champ de vecteurs
$$x^3\frac{\partial}{\partial x}+y^3\frac{\partial}{\partial y}.$$
On remarque que  
$x^2y^2(x-y)^{-1}(x+y)^{-1}$ est une int\'egrale premi\`ere rationnelle de $\mathcal{F};$
ses feuilles sont des quartiques ayant deux singularit\'es ordinaires, donc des 
courbes elliptiques. On v\'erifie que $\mathcal{H}(x,y,z)=3x^3y^3z(x+y)(y-x);$
on en d\'eduit que $\mathrm{Flex}(\mathcal{F})=\emptyset.$
On constate que $\Delta(P)=3x^6y^6(x+y)^4(y-x)^4$ est un carr\'e.
La trivolution associ\'ee \`a $\mathcal{F}$ s'\'ecrit
$$\mathcal{T}_\mathcal{F}=\left(\frac{\mathbf{j}x(x^2-y^2)}{x^2-\mathbf{j}y^2},
\frac{y(x^2-y^2)}{x^2-\mathbf{j}y^2}\right);$$
c'est encore une transformation de Jonqui\`eres qui est de degr\'e $3.$ 
Son ensemble exceptionnel
\begin{align*}
&\mathrm{Exc}(\mathcal{T}_\mathcal{F})=\{x-y=0\}\cup \{x+y=0\}\cup\{\mathbf{j}x-y=0\}
\cup\{\mathbf{j}x+y=0\}
\end{align*}
\hspace{-0.2mm}est constitu\'e de quatre droites dont deux sont invariantes par le feuilletage. On remarque que 
$$\mathcal{T}_\mathcal{F}^2=\left(\frac{x(x^2-y^2)}{\mathbf{j}x^2-y^2},
\frac{\mathbf{j}y(x^2-y^2)}{\mathbf{j}x^2-y^2}\right).$$

\noindent L'ensemble exceptionnel de $\mathcal{T}_\mathcal{F}^2$ 
$$\mathrm{Exc}(\mathcal{T}_\mathcal{F}^2)=\{x-y=0\}\cup \{x+y=0\}\cup\{\mathbf{j}y-x=0\}
\cup\{\mathbf{j}y+x=0\}$$
est encore form\'e des deux m\^emes droites invariantes auxquelles 
s'ajoutent deux nouvelles droites.
C'est un exemple o\`u~$\mathrm{Flex}(\mathcal{F})$ est vide et 
o\`u la trivolution a seulement des points 
fixes isol\'es.

\noindent Posons $f_0=\frac{x^2y^2}{x^2-y^2}$ et $\mathcal{F}_0=\mathcal{F}.$ Faisons agir $\mathcal{T}_\mathcal{F}$ et $\mathcal{T}_\mathcal{F}^2$ sur $\mathcal{F}$ pour obtenir
les feuilletages $\mathcal{F}_1$ et $\mathcal{F}_2$ d\'efinis par les fonctions rationnel\-les~$f_1=f_0\mathcal{T}_\mathcal{F}$ et $f_2=f_0\mathcal{T}_\mathcal{F}^2.$
Consid\'erons le $3$-tissu $\mathcal{W}=(\mathcal{F}_0,\mathcal{F}_1,\mathcal{F}_2);$ en un point g\'en\'erique il est aussi donn\'e par
\begin{align*}
&F_0=f_0^{-1/3}=\frac{x^2-y^2}{(xy(x^2-y^2))^{2/3}}, && F_1=f_1^{-1/3}=
\frac{x^2-\mathbf{j}y^2}{(xy(x^2-y^2))^{2/3}}
&& \text{et} && F_2=f_2^{-1/3}=\frac{x^2-\mathbf{j}^2y^2}{(xy(x^2-y^2))^{2/3}}.
\end{align*} 

\noindent Soient $(a_0,a_1,a_2)$ une solution non triviale du syst\`eme lin\'eaire 
\begin{align*}
&a_0+a_1+a_2=0, && a_0+\mathbf{j}a_1+\mathbf{j}^2a_2=0;
\end{align*}
\hspace{-0.9mm}alors $a_0F_0+a_1F_1+a_2F_2=0:$ le tissu $\mathcal{W}$ est hexagonal. 
\end{eg}

\noindent La proposition \ref{ununun} sugg\`ere que la configuration $\alpha\in\{-1,\,2,\,1/2\}$ joue un r\^ole tr\`es sp\'ecial. 

\noindent Venons-en au cas g\'en\'eral. On va d\'ecrire dans l'espace des param\`etres $\{(\alpha,\lambda,\mu,\nu)\in 
(\mathbb{C}^*)^4\,\vert\,\alpha\not=~1\}$ les feuilleta\-ges~$\mathcal{F}(\alpha;\lambda,\mu,\nu)$ auxquels est associ\'ee une trivolution. 
Introduisons~$\Lambda$ 
le projectivis\'e de l'espace des polyn\^omes homog\`enes de
degr\'e $4$ en $2$ variables; on remarque que~$\Lambda\simeq~\mathbb{P}^4(\mathbb{C}).$ Posons $$\Theta=\{T\in\Lambda\,\vert\, T\text{ est un carr\'e}\}.$$
Un \'el\'ement $T=\tau_1x^4+\tau_2x^3y+\tau_3x^2y^2
+\tau_4xy^3+\tau_5y^4$ appartient \`a $\Theta$ si et seulement s'il existe des com\-plexes~$A,$ $B$ et 
$C$ tels que $T=(Ax^2+Bxy+Cy^2)^2;$ autrement dit si et seulement si 
\begin{align*}
&\tau_1=A^2,&&\tau_2=2AB, && \tau_3=B^2+2AC, &&\tau_4=2BC, &&\tau_5=C^2.
\end{align*}
\hspace{-0.2mm}L'application $\Psi\colon(A,B,C)\mapsto(A^2,2AB,B^2+2AC,2BC,C^2)$
induit une application holomorphe, en fait un plongement de~$\mathbb{P}^2
(\mathbb{C})$ dans $\mathbb{P}^4(\mathbb{C})$ not\'e encore $\Psi.$ Son image, qui est $\Theta,$ 
est en fait une 
projection de la surface de Veronese standard de $\mathbb{P}^5(\mathbb{C})$
sur~$\mathbb{P}^4(\mathbb{C}).$
On peut \'evidemment donner l'id\'eal de d\'efinition de $\Theta.$
Toutefois pour les calculs pratiques il est plus commode d'utiliser la 
pr\'esentation ensembliste suivante. On d\'ecoupe l'espace $\mathbb{P}^4(\mathbb{C})$
comme l'union de l'hyper\-plan~$\tau_5=0$ et de l'ouvert (carte affine) $\tau_5=1$
et nous allons d\'ecrire $\Theta$ relativement \`a ce d\'ecoupage. Pour $T$ comme ci-dessus on note enco\-re~$T=(\tau_1:\ldots:\tau_5).$ Remarquons que $\Psi$ envoie la carte affine $C=1$ de $\mathbb{P}^2(\mathbb{C})$ dans la carte affi\-ne~$\tau_5=1$ de $\mathbb{P}^4(\mathbb{C}).$ Ainsi si $T=(\tau_1:\ldots:\tau_4:0)$ est dans $\Psi(\{C=0\})$ on a 

\noindent\textbf{\textit{(i)}} $\,\,$ $\tau_4=\tau_5=0$ et $4\tau_1\tau_3-\tau_2^2=0.$

\noindent Maintenant si $T=(\tau_1:\ldots:\tau_4:1)$ est dans $\Psi(\{C=1\})$ on v\'erifie que lorsque $\tau_4=0$ on a 

\noindent \textbf{\textit{(ii)}} $\tau_5=1,$ $\tau_2=\tau_4=\tau_3^2-4\tau_1=0$

\noindent et que pour $\tau_4\not=0$ les $\tau_i$ v\'erifient

\noindent \textbf{\textit{(iii)}} $\tau_5=1,$ $\tau_4\not=0,$ $\tau_4^2\tau_1-\tau_2^2=4\tau_3\tau_4-\tau_4^3-8\tau_2=0.$

\noindent On remarque que l'annulation de $\tau_4$ dans \textbf{\textit{(iii)}} ne produit pas \textbf{\textit{(ii)}}.

\bigskip

\noindent Soit donc $\mathcal{F}=\mathcal{F}(\alpha;\lambda,\mu,\nu),$ le quadruplet $(\alpha,\lambda,\mu,\nu)$ \'etant admissible. 
 Avec les notations pr\'ec\'edentes on obtient 
$$\Delta(P)=(\lambda+\nu+\mu+1)^4x^4y^4(y^2-(\alpha+1)xy+\alpha x^2)^4R(x,y)$$
o\`u $R(x,y)$ d\'esigne le polyn\^ome $r_1x^4+r_2x^3y+r_3x^2y^2
+r_4xy^3+r_5y^4$ avec
\begin{small}
\begin{align*}
r_1=\lambda^2\alpha^2(2\alpha\nu\mu+\alpha^2+2\alpha^2\nu+\alpha^2\nu^2-2\alpha-2\alpha\mu-2\alpha\nu+1+2\mu+\mu^2),
\end{align*}
\begin{align*}
&r_2= -2\lambda\alpha\Big(\lambda\alpha\nu\mu+\lambda\alpha^2\mu\nu-3\alpha^2\mu\nu+\nu\mu^2+\lambda\mu^2-\lambda\alpha\nu+\alpha\nu^2\mu-2\alpha\nu\mu^2+\alpha^2\nu\mu^2-3\alpha\nu\mu-\lambda\alpha\mu-\lambda\alpha^2\mu+2\nu\mu-\alpha^2\mu^2\\
&\hspace{1cm}+\lambda+\nu+2\mu\lambda-\lambda\alpha^2-\lambda\alpha+\lambda\alpha^3+\nu^2\mu\alpha^3+2\nu\mu\alpha^3-\alpha\nu-\alpha^2 \mu+\mu\alpha^3-\lambda\nu\alpha^2-2\mu\nu^2\alpha^2-\nu^2\alpha+2\lambda\nu\alpha^3+\lambda\nu^2\alpha^3\Big),
\end{align*}
\begin{align*}
&r_3= 2\mu\nu^2-4\lambda\alpha^2\mu^2-4\lambda\alpha\nu\mu^2-2\lambda\alpha\nu\mu+2\nu^2\lambda\alpha^2\mu-12\lambda \alpha^2\mu\nu+2\lambda^2\nu\alpha^2\mu+2\lambda\nu\alpha^2\mu^2+4\lambda\nu\mu\alpha^4-4\lambda\nu^2\mu\alpha^3+2 \lambda\nu^2\mu\alpha^4\\
&\hspace{1cm}+4 \lambda\nu\mu+6\nu^2\alpha^2\mu^2-4\lambda^2\alpha^2\nu-4\lambda^2\alpha^2\mu-4\alpha\nu^2\mu+2\alpha^2\nu\mu^2-4\lambda \alpha^2\mu+2\lambda^2\alpha\mu+2\mu\lambda\alpha^4+2\alpha^2\mu\nu-2\lambda\nu\mu\alpha^3+2\lambda\nu\mu^2\\
& \hspace{1cm}+2\lambda\alpha\nu-4\alpha\mu^2\nu^2+2\lambda^2\mu+\lambda^2\mu^2+2\lambda^2\alpha-6\lambda^2\alpha^2+\nu^2+2\lambda\nu+\lambda^2+\mu^2\alpha^4+\lambda^2\alpha^4+2\lambda^2\alpha^3-4 \mu^2 \nu^2\alpha^3+2\lambda^2\nu\alpha^4\\
&\hspace{1cm}+\mu^2 \nu^2-4\lambda\nu\alpha^2-4\nu \mu^2\alpha^3+2\mu\nu^2\alpha^2-4\lambda\nu^2\alpha^2+2\lambda^2\nu\alpha^3+2\mu\lambda\alpha^3+\mu^2\nu^2\alpha^4+2\nu\mu^2\alpha^4+\lambda^2\nu^2\alpha^4,
\end{align*}
\begin{align*}
&r_4=-2\Big(\mu\nu^2-3\lambda\alpha\nu\mu-3\lambda\alpha^2\mu\nu+\alpha^2\mu\nu+2\lambda\nu\mu\alpha^3-\lambda\alpha\nu+2 \lambda\nu\mu-\lambda^2\alpha^2\nu-2\alpha\nu^2\mu+\alpha\nu\mu^2-2\alpha^2\nu\mu^2+\alpha\nu\mu-\lambda\alpha\mu-\lambda\alpha\mu^2-\lambda\alpha^2\mu\\
&\hspace{1cm}+\mu^2\alpha^3+\lambda^2\alpha^3-\lambda\nu\alpha^2+\nu\mu^2\alpha^3+\mu\nu^2\alpha^2-\lambda\nu^2\alpha^2+\lambda^2\nu\alpha^3+2\mu\lambda\alpha^3-\lambda^2\alpha\mu+\lambda^2\mu-\lambda^2\alpha-\lambda^2\alpha^2+\nu^2+2\lambda\nu+\lambda^2\Big),
\end{align*}
\begin{align*}
r_5= \lambda^2+\lambda^2\alpha^2-2\lambda^2\alpha+2\lambda\alpha^2\mu+2\lambda\nu-2\lambda\alpha\nu-2\lambda\alpha\mu+2 \alpha\nu\mu+\nu^2+\alpha^2\mu^2.
\end{align*}
\end{small}

\bigskip

\noindent Consid\'erons l'application $f$ d\'efinie par
\begin{align*}
&\mathbb{C}^4\to \mathbb{P}^4(\mathbb{C}), &&(\alpha,\lambda,\mu,\nu)\mapsto(r_1:
r_2:r_3:r_4:r_5).
\end{align*}
\hspace{-0.3mm}L'ensemble $f^{-1}(\Theta)$ donne les param\`etres 
$(\alpha,\lambda,\mu,\nu)$ pour lesquels le discriminant associ\'e est
un carr\'e; cet ensemble s'identifie donc \`a celui des feuilletages homog\`enes $\mathcal{F}(\alpha;\lambda,\mu,\nu)$ auxquels on peut associer une trivolution. On va d\'ecrire~$f^{-1}(\Theta)$ pr\`es du point sp\'ecial $(-1,1,1,1).$ Pla\c{c}ons-nous dans la carte $\tau_5=1.$ La matrice jacobienne de $f$ au point $(-1,1,1,1)$ est 
$$\mathrm{Jac}(f)_{(-1,1,1,1)}=\left[
\begin{array}{cccc}
-2 & 2/3 & 0 & 0 \\
16/3 & 0 & 2/3 & -2/3 \\
-2 & -14/3 & 16/3 & 16/3\\
-16/3 & 0 & 2/3 & -2/3
\end{array}
\right].$$
Il en r\'esulte que la diff\'erentielle de $f$ au point $(-1,1,1,1)$
est de rang maximum.
 On peut v\'erifier que $f(-1,1,1,1)=(12,0,24,0,12)$ et que si $(\alpha,\lambda,\mu,\nu)$ est admissible et v\'erifie $f(\alpha,\lambda,
\mu,\nu)=(12,0,24,0,12)$ alors $(\alpha,\lambda,\mu,\nu)=(-1,1,1,1).$ 
On remarque que $\Psi^{-1}(12:0:24:0:12)=\{(1:0:1)\}$
et que la diff\'erentielle de $\Psi$ en $(1:0:1)$ est de 
rang $3.$ Dans la carte affine $\tau_1=1,$ l'application $\Psi$ s'\'ecrit
$\widetilde{\Psi}\colon(B,C)\mapsto(2B,B^2+2C,2BC,C^2).$
Le plan tangent \`a $f^{-1}(\Theta)$ au point $(-1,1,1,1)$ est
l'image de 
$$\mathrm{Jac}(f)_{(-1,1,1,1)}^{-1}\mathrm{Jac}(\widetilde{\Psi})_{(0,1)}
=\left[
\begin{array}{cc}
0 & 0 \\
3 & 0\\
\frac{3}{2} & \frac{3}{2}\\
\frac{3}{2} & -\frac{3}{2}
\end{array}
\right].$$
Par suite sur $f^{-1}(\Theta)$ au voisinage de $(-1,1,1,1)$ et \`a l'ordre $1$
on a
\begin{align*}
&\alpha=-1, &&\lambda=1+3u, &&\mu=1+\frac{3}{2}(u+v), && \nu=1+\frac{3}{2}(u-v)
\end{align*}
\hspace{-0.2mm}ou encore $ \alpha=-1,$ et$\nu-1-\lambda+\mu=0.$ En particulier, on en d\'eduit l'\'enonc\'e suivant.

\begin{thm}
{\sl L'espace des param\`etres $(\alpha,\lambda,\mu,\nu)$ admissibles
pour lesquels est associ\'ee \`a $\mathcal{F}(\alpha;\lambda,\mu,\nu)$ une trivolution est localement
au point $(-1,1,1,1)$ une surface lisse.}
\end{thm}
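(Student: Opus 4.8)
The plan is to read the statement off from two geometric facts that the discussion above has already essentially established: first, that $\Theta$ is a smooth surface in $\mathbb{P}^4(\mathbb{C})$, and second, that the map $f$ is a local biholomorphism at the point $p_0:=(-1,1,1,1)$. Before using them I would note that $p_0$ lies in the interior of the admissible locus, since $\alpha\lambda\mu\nu(\lambda+\mu+\nu+1)=-4\neq 0$ and $\alpha=-1\neq 1$; hence admissibility imposes no constraint in a small enough neighbourhood of $p_0$, and there the parameter set to be studied coincides exactly with $f^{-1}(\Theta)$. Indeed, by the very construction of $f$ and $\Theta$, the set $f^{-1}(\Theta)$ is the locus where $\Delta(P)$, equivalently the quartic $R(x,y)$, is a square (here $\Delta(P)=(\lambda+\nu+\mu+1)^4x^4y^4(y^2-(\alpha+1)xy+\alpha x^2)^4R(x,y)$ and the first three factors already form a square since $\lambda+\nu+\mu+1\neq 0$ by admissibility), which by Proposition~\ref{carreee} is precisely the locus of $\mathcal{F}(\alpha;\lambda,\mu,\nu)$ carrying a trivolution. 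So it suffices to prove that $f^{-1}(\Theta)$ is a smooth surface near $p_0$.

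Next I would record that $\Theta$ is a smooth surface: since $\Psi\colon\mathbb{P}^2(\mathbb{C})\to\mathbb{P}^4(\mathbb{C})$ is a holomorphic embedding, its image $\Theta$ is a closed submanifold of dimension $2$, smooth at every point — in particular at $q_0:=f(p_0)=(12:0:24:0:12)$, which lies in the affine chart $\tau_5=1$. The core step is then the holomorphic inverse function theorem. The Jacobian $\mathrm{Jac}(f)_{p_0}$ written above is invertible, and the source $\mathbb{C}^4$ has the same dimension as the chart $\tau_5=1$ of $\mathbb{P}^4(\mathbb{C})$ containing $q_0$; hence $\mathrm{d}f_{p_0}$ is an isomorphism and $f$ restricts to a biholomorphism $U\to V$ from a neighbourhood $U$ of $p_0$ onto a neighbourhood $V$ of $q_0$. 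Shrinking $V$ so that $V\cap\Theta$ is an open piece of the smooth surface $\Theta$, the biholomorphism $f|_U$ carries $f^{-1}(\Theta)\cap U$ onto $V\cap\Theta$; therefore $f^{-1}(\Theta)$ is a smooth surface near $p_0$, which is the assertion. As a bonus, pulling $T_{q_0}\Theta$ back by $\mathrm{d}f_{p_0}^{-1}$ — {\it i.e.} the column span of $\mathrm{Jac}(f)_{p_0}^{-1}\mathrm{Jac}(\widetilde{\Psi})_{(0,1)}$ displayed above — identifies the tangent plane to $f^{-1}(\Theta)$ at $p_0$ with $\{\alpha=-1,\ \nu-1-\lambda+\mu=0\}$ to first order.

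There is no genuine obstacle here: the computational weight — producing $R(x,y)$, the matrix $\mathrm{Jac}(f)_{p_0}$, and the proof that $\Psi$ is an embedding — has already been carried out, and the theorem merely packages it. The only items deserving a moment's care are the invertibility of the explicit $4\times 4$ matrix $\mathrm{Jac}(f)_{p_0}$ (a single determinant computation), the fact that $q_0$ is a smooth point of $\Theta$ (automatic, since $\Psi$ is an embedding so $\Theta$ is smooth everywhere), and that no extraneous branch of $f^{-1}(\Theta)$ can accumulate at $p_0$ — which is immediate, the biholomorphism $f|_U$ putting $f^{-1}(\Theta)\cap U$ in bijection with $\Theta\cap V$. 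One could equivalently phrase the core step as: $f$ is a submersion at $p_0$, hence transverse to $\Theta$, so $f^{-1}(\Theta)$ is a submanifold of $\mathbb{C}^4$ of codimension $\mathrm{codim}\,\Theta=2$, {\it i.e.} a surface.
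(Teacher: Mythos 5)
Your argument is correct and is essentially the paper's own: the paper establishes that $\Psi$ is an embedding (so $\Theta$ is a smooth surface), that $\mathrm{Jac}(f)_{(-1,1,1,1)}$ has maximal rank, and then reads off $f^{-1}(\Theta)$ as a smooth surface with the stated tangent plane; you merely make the inverse-function-theorem (or transversality) step explicit. The only difference is cosmetic — the paper additionally records that $(-1,1,1,1)$ is the unique admissible preimage of $(12:0:24:0:12)$, which, as you implicitly note, is not needed for the purely local statement.
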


\noindent La trace de cette surface est pr\'ecis\'ee sur la section hyperplane
$\alpha=-1$ par la proposition qui suit.

\begin{pro}\label{soleil}
{\sl On peut associer une trivolution
\`a $\mathcal{F}(-1;\lambda,\mu,\nu)$ si et seulement si le quadruplet 
admissi\-ble~$(-1,\lambda,\mu,\nu)$ satisfait l'une des propri\'et\'es suivantes
\begin{itemize}
\item[(a)] $(4-\nu)\lambda-\nu(2\nu+1)=0$ et $\mu = \nu;$

\item[(b)] $\lambda = 1$ et $\mu+\nu-4\mu\nu+2=0.$
\end{itemize}}
\end{pro}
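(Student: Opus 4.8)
The plan is to follow, step for step, the scheme of the proof of Proposition~\ref{ununun}. By the remark following Proposition~\ref{carreee}, a trivolution can be associated to $\mathcal F(-1;\lambda,\mu,\nu)$ precisely when the discriminant $\Delta(P)$ is a square in $\mathbb C[x,y]$. Specialising the factorisation of $\Delta(P)$ displayed just before the statement at $\alpha=-1$, one finds $\Delta(P)=(\lambda+\mu+\nu+1)^{4}x^{4}y^{4}(y^{2}-x^{2})^{4}R(x,y)$, and for an admissible quadruple $(-1,\lambda,\mu,\nu)$ the prefactor equals $\big((\lambda+\mu+\nu+1)^{2}x^{2}y^{2}(y^{2}-x^{2})^{2}\big)^{2}$, a nonzero square; since $\mathbb C[x,y]$ is a unique factorisation domain, the condition is therefore equivalent to asking that the binary quartic $R(x,y)=r_{1}x^{4}+r_{2}x^{3}y+r_{3}x^{2}y^{2}+r_{4}xy^{3}+r_{5}y^{4}$, with the $r_{i}$ evaluated at $\alpha=-1$, be a perfect square; equivalently, leaving aside the degenerate case $R\equiv0$, that the point $(r_{1}:\dots:r_{5})$ of $\mathbb P^{4}(\mathbb C)$ belong to $\Theta$.

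First I would substitute $\alpha=-1$ in the five expressions $r_{1},\dots,r_{5}$, which simplifies them considerably and leaves five polynomials in $\lambda,\mu,\nu$ to serve as the coordinates $\tau_{i}$. Then I would invoke the set-theoretic description of $\Theta$ recalled above, splitting into the three cases: $r_{5}=0$ (the image of $\{C=0\}$, ruled by~\textit{(i)}), $r_{5}\neq0$ with $r_{4}=0$ (ruled by~\textit{(ii)}), and $r_{5}\neq0$ with $r_{4}\neq0$ (ruled by~\textit{(iii)}). Each case gives a polynomial system in $(\lambda,\mu,\nu)$; I would solve each by elimination (resultants or a Gr\"obner basis), discarding the components that lie in the non-admissible locus $\lambda\mu\nu(\lambda+\mu+\nu+1)=0$, as well as the locus $R\equiv0$, which one should check meets no admissible parameter. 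The point to establish is that, inside the admissible locus, the union of the three solution sets is exactly the union of the curve~(a), $\{\mu=\nu,\ (4-\nu)\lambda-\nu(2\nu+1)=0\}$, and the curve~(b), $\{\lambda=1,\ \mu+\nu-4\mu\nu+2=0\}$. For sufficiency it is cleanest to feed the relations of~(a), respectively~(b), back into $R|_{\alpha=-1}$ and exhibit it there as an explicit square $(Ax^{2}+Bxy+Cy^{2})^{2}$, with $A,B,C$ rational functions of the single remaining parameter.

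Along the way I would record which of~\textit{(i)},~\textit{(ii)},~\textit{(iii)} produces which of the two curves --- one of them, presumably~(b), should reappear as the specialisation $\lambda=1$ already met in Proposition~\ref{ununun} --- and I would cross-check against the local picture near the distinguished point: both~(a) and~(b) pass through $(-1,1,1,1)$, and their tangent lines there must lie in the plane $\{\alpha=-1,\ \mu+\nu-1-\lambda=0\}$ found above, which is also what makes the hyperplane section $\{\alpha=-1\}$ of the smooth surface $f^{-1}(\Theta)$ a pair of curves crossing at $(-1,1,1,1)$.

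The hard part will be purely the bookkeeping of the elimination: even after setting $\alpha=-1$ the five $r_{i}$ stay bulky, and proving that the three elimination ideals, intersected with the admissible locus, cut out \emph{exactly} $(\mathrm a)\cup(\mathrm b)$ --- with nothing missed and no spurious extra component --- is a genuinely tedious case analysis, of the same nature as the companion generalisation for which the paper already signals that the computations are somewhat more painful. One concrete pitfall is the boundary of case~\textit{(iii)}: as observed just after its statement, the locus $r_{4}=0$ inside~\textit{(iii)} does not reduce to case~\textit{(ii)}, so it must be handled on its own lest one drop the solutions with $r_{4}|_{\alpha=-1}=0$.
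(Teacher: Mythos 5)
Your plan is essentially the paper's own proof: reduce the existence of a trivolution to $R\vert_{\alpha=-1}$ being a square (the prefactor in $\Delta(P)$ being a nonzero square by admissibility), then run the set-theoretic description of $\Theta$ through the three cases \textbf{\textit{(i)}}, \textbf{\textit{(ii)}} ($r_5\neq0$, $r_4=0$), \textbf{\textit{(iii)}} ($r_5\neq0$, $r_4\neq0$), pruning by the admissibility conditions $\lambda\mu\nu\neq0$, $\lambda+\mu+\nu+1\neq0$; this is exactly the decomposition used in the paper, where case \textbf{\textit{(i)}} yields only $\lambda=1$, $\mu=\nu=-1/2$ (the situation of the example~\ref{alpha-1b}, lying on (b)), case \textbf{\textit{(ii)}} yields family (a), and case \textbf{\textit{(iii)}} yields family (b). The only substantive difference is in execution: where you defer the eliminations to resultants or a Gr\"obner basis, the paper makes them tractable by hand via the substitution $u=\mu+\nu$, $w=\mu-\nu$, after which the relevant combinations factor completely, e.g. $r_4^2r_1-r_5r_2^2=16\lambda^2w^2(\lambda-1)(u+w)(u-w)(\lambda+u+1)^3$, so that admissibility immediately forces $\lambda=1$ in case \textbf{\textit{(iii)}}, and similarly $r_3^2-4r_1r_5=-u^3(u+\lambda+1)^3((u-8)\lambda+u(u+1))$ in case \textbf{\textit{(ii)}} with $w=0$. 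Since these computations are the entire content of the proposition, your text as written is a correct strategy rather than a finished proof; carrying out the eliminations (or exhibiting $R$ as an explicit square along (a) and (b), as you suggest for sufficiency) is what remains, and the paper confirms that doing so produces exactly $(\mathrm a)\cup(\mathrm b)$ with no extra admissible component. One small point: your worry about the locus $r_4=0$ inside case \textbf{\textit{(iii)}} is moot, since that locus is by definition case \textbf{\textit{(ii)}}, which you already treat separately; the paper's remark only says the equations of \textbf{\textit{(iii)}} do not specialise to those of \textbf{\textit{(ii)}}.
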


\begin{rems}\label{cirm}
\begin{itemize}
\item On remarque que $(-1,1,1,1)$ v\'erifie les conditions {\sl (a)} et {\sl (b)}.

\item On passe de la condition {\sl (a)} \`a la condition {\sl (b)} par $(x,y)\mapsto(x+y,y-x),$ la
non homog\'en\'eit\'e apparente r\'esultant du fait qu'on a normalis\'e \`a 
$1$ l'exposant de $x.$
\end{itemize}
\end{rems}

\begin{proof}[{\sl D\'emonstration}]
\noindent Le quadruplet $(-1,\lambda,\mu,\nu)$ est suppos\'e 
admissible. Quitte \`a poser $u=\mu+\nu$ et $w=\mu-\nu$ les $r_i$
s'\'ecrivent
\begin{align*}
&r_1 = \lambda^2 (4u + 4 + w^2), &&
r_2 = 2\lambda w (-u - 2 +\lambda u-w^2 + u^2 +2\lambda), 
\end{align*}
\begin{align*}
&r_3 = u^2 - 2 \lambda u w^2 - 8 \lambda^2 + 2 u^3 - 2 u w^2
- 2 \lambda u^2 - 2 \lambda w^2 - 4 \lambda u + 2 \lambda u^3 - 4 \lambda^2 u
+ \lambda^2 u^2 +(u^2-w^2)^2,
\end{align*}
\begin{align*}
&r_4 =2w(u-\lambda u-w^2+u^2+2\lambda-2\lambda^2), &&
r_5 = 4\lambda^2 + 4\lambda u + w^2.
\end{align*}

\noindent Remarquons que $\lambda+\mu+\nu+1\not=0$ se r\'e\'ecrit 
$\lambda+u+1\not=0.$ Cette pr\'esentation, bien qu'asym\'etrique, 
permet en fait d'\og all\'eger\fg\, les calculs qui vont suivre.

\noindent Pour que la condition $\textbf{\textit{(i)}}$ soit v\'erifi\'ee il faut 
que $r_5$ soit nul autrement dit que $u=-\frac{4\lambda^2+w^2}{4\lambda}.$
Alors~$r_4$ se r\'e\'ecrit
$$\frac{2w}{\lambda^2}\left(\lambda-\frac{w}{2}\right)\left(\lambda+\frac{w}{2}\right)
\left(\lambda-\frac{w^2}{4}\right).$$
Comme le quadruplet $(-1,\lambda,\mu,\nu)$ est admissible (c'est-\`a-dire $\lambda\mu\nu\not=0$ et $\lambda+\mu+\nu+1\not=0$), on v\'erifie que les 
condi\-tions~$r_4=0$ et $4r_1r_3-r_2^2=0$ impliquent que $\lambda=1$ et $\mu=\nu=-1/2.$ Ces valeurs des param\`etres satisfont la condition~{\sl (b)}. Ceci correspond exactement \`a l'exemple \ref{alpha-1b}.

\medskip

\noindent \'Etudions maintenant la condition $\textbf{\textit{(ii)}}$
que l'on traduit par $r_5\not=0,$ $r_4=r_2=r_3^2-4r_1=0.$ Toujours en
invoquant l'admissibilit\'e on v\'erifie que les conditions $r_2=r_4=0$
sont satisfaites pour
\begin{itemize}
\item[1)] $w=0$

ou

\item[2)] $\lambda(u+2)-w^2+u^2-u-2=0$ et $u^2+u-\lambda u-2\lambda^2+2\lambda=0,$
\end{itemize}

\noindent conditions que nous allons examiner cas par cas.

\noindent Si $w$ est nul, {\it i.e.} $\mu=\nu,$ on v\'erifie que 
$r_3^2-4r_1r_5=-u^3(u+\lambda+1)^3((u-8)\lambda+u(u+1)).$
Maintenant l'admissibilit\'e exige la non-nullit\'e de $u$ et $u+\lambda+1;$
de sorte que $r_3^2-4r_1r_5=0$ si et seulement si $(u-8)\lambda+u(u+1)=0.$
Finalement on obtient
\begin{align*}
&\mu=\nu &&\text{et}&&(4-\nu)\lambda-\nu(2\nu+1)=0.
\end{align*}

\noindent On constate ensuite que l'\'eventualit\'e 2) ne produit pas de nouvelles
solutions admissibles.

\medskip

\noindent Pour finir traitons la condition $\textbf{\textit{(iii)}}.$ On constate que $r_4^2r_1-r_5r_2^2=16\lambda^2w^2(\lambda-1)(u+w)(u-w)(\lambda
+u+1)^3.$

\noindent Le quadruplet $(-1,\lambda,\mu,\nu)$ est admissible et $r_4$ est non nul donc $w$ aussi. Il s'en suit que $\lambda$ vaut $1;$ alors 
$4r_3r_4r_5-r_4^3-8r_2r_5^2$ se r\'e\'ecrit
\begin{align*}
32w(u+2)^3(u+w)(u-w)(u^2-w^2-u-2)&&\text{ou
encore}&&
128(\mu-\nu)(\mu+\nu+2)^3\mu\nu(4\mu\nu-\mu-\nu-2).
\end{align*}
Puisque $\mu,$ $\nu$ et $\mu+\nu+\lambda+1$ sont suppos\'es non nuls, $4r_3r_4r_5-r_4^3-8r_2r_5^2=0$ si et seulement si $\mu+\nu-4\mu\nu+2=~0.$
\end{proof}

\noindent D\'ecrivons pr\'ecis\'ement la premi\`ere famille de l'\'enonc\'e \ref{soleil}, la
seconde s'en d\'eduit (Remarque \ref{cirm}). Le feuilletage $\mathcal{F}$ 
est alors d\'efini par le champ
$$\frac{\nu x\Big((2\nu+1)x^2-9y^2\Big)}{4-\nu}\frac{\partial}{\partial x}-y\Big((2\nu+1)x^2-y^2\Big)\frac{\partial}{\partial y}.$$
On v\'erifie que $\Delta(P)$ est un carr\'e
$$\Delta(P)=\frac{1024\nu^2(2\nu+1)^5}{(\nu-4)^6}x^4y^4(x^2-y^2)^4\Big((2\nu+1)x^2+3y^2\Big)^2.$$
Choisissons une racine $\widetilde{\nu}$ de $2\nu+1.$ 
\`A $\mathcal{F}$ est associ\'ee une 
trivolution $\mathcal{T}_\mathcal{F}$ de la forme
$\left(\frac{U_1}{WU_2},\frac{V_1}{WV_2}\right)$ avec 
\begin{small}
\begin{align*}
&U_1=4xy\left(\nu\widetilde{\nu}(2\nu+1)^2x^5
-(12\nu^3+28\nu^2+19\nu+4)x^4y
+2\nu\widetilde{\nu}(\nu-2\nu^2+1)x^3y^2\right.\\
&\hspace{1cm}\left.+2(6\nu^3+13\nu^2+13\nu+4)x^2y^3
-3\nu\widetilde{\nu}(2\nu+1)xy^4+(2\nu^2-7\nu-4)y^5\right),
\end{align*}
\begin{align*}
&V_1=4xy\left(\nu(1+6\nu+12\nu^2+8\nu^4)x^5
+\widetilde{\nu}(4\nu^3-12\nu^2-15\nu-4)x^4y+2\widetilde{\nu}(9\nu^2-3\nu-4-2\nu^3)x^2y^3\right.\\
&\hspace{1cm}\left.-2(45\nu^2+16\nu^3+4\nu^4+8+35\nu)x^3y^2+(20\nu^3+16+69\nu+84\nu^2)xy^4
+3\widetilde{\nu}(4-2\nu^2+7\nu)y^5\right),
\end{align*}
\begin{align*}
&W=\nu^2(2\nu+1)^2x^4-2(2\nu^3+25\nu^2+28\nu+8)x^2y^2+(\nu-4)^2y^4,
&&U_2=(2\nu+1)x^2-y^2,
&&V_2=(2\nu +1)x^2-9y^2.
\end{align*}
\end{small}

\bigskip

\section{Conclusion}

\noindent Les exemples qui pr\'ec\`edent soul\`event plus de probl\`emes qu'ils n'en r\'esolvent et on peut d\'egager un certain nombre de questions. En premier lieu on peut se demander quels sont les feuilletages associ\'es aux involutions de Bertini; ont-ils des propri\'et\'es (dynamiques) sp\'eciales ? Dans un autre ordre d'id\'ees nous avons plusieurs fois mentionn\'e la construction de $3$-tissus associ\'es \`a certains feuilletages produisant des trivolutions. Le fait que ces $3$-tissus soient hexagonaux est-il un fait anecdotique ? Enfin il serait int\'eressant de d\'ecrire compl\`etement la vari\'et\'e des feuilletages de degr\'e $3$ qui produisent des trivolutions. En particulier quel est son degr\'e ? Est-elle irr\'eductible ? Rationnelle ?

\vspace{8mm}

\bibliographystyle{alpha}
\bibliography{biblioinvbirfeuil}
\nocite{}

\end{document}